\numberwithin{equation}{section}
\newtheorem{thm}{Theorem}[section]
\newtheorem{prop}[thm]{Proposition}
\newtheorem{ques}[thm]{Question}
\newtheorem{remark}[thm]{Remark}
\newtheorem{lem}[thm]{Lemma}
\newtheorem{cor}[thm]{Corollary}
\newtheorem{conjecture}[thm]{Conjecture}
\newtheorem{assump}{Assumption}[section]
\theoremstyle{definition}
\newcommand{\bQ}{\overline{\mathbb{Q}}}
\newcommand{\C}{\mathbb{C}}
\newcommand{\R}{\mathbb{R}}
\newcommand{\Q}{\mathbb{Q}}
\newcommand{\F}{\mathbb{F}}
\newcommand{\A}{\mathbb{A}}
\newcommand{\diag}{{\rm diag}}
\newcommand{\brho}{\overline{\rho}}
\newcommand{\ds}{\displaystyle}
\newcommand{\bF}{\overline{\F}}
\newcommand{\lra}{\longrightarrow}
\renewcommand{\Bbb}{\mathbb}
\title[A conditional construction of Artin representations of $GL_n$]
{A uniform structure on subgroups of $GL_n(\F_q)$ and \\ its application to
a conditional construction of \\ Artin representations of $GL_n$} 
\author{Henry H. Kim and Takuya Yamauchi}
\thanks{The first author is partially supported by NSERC. The second author
is partially supported by JSPS Grant-in-Aid for Scientific Research (C) No.15K04787}
\subjclass[2010]{}
\address{Henry H. Kim \\
Department of mathematics \\
 University of Toronto \\
Toronto, Ontario M5S 2E4, CANADA \\
and Korea Institute for Advanced Study, Seoul, KOREA}
\email{henrykim@math.toronto.edu}
\address{Takuya Yamauchi \\
Department of mathematics, Faculty of Education\\
Kagoshima University\\
Korimoto 1-20-6 Kagoshima 890-0065, JAPAN}
\email{yamauchi@edu.kagoshima-u.ac.jp}
\begin{document}

\begin{abstract}
Continuing our investigation in \cite{kim-yam}, where we associated an Artin representation to a vector-valued real analytic Siegel cusp form of weight $(2,1)$ under reasonable assumptions, 
we associate an Artin representation of $GL_n$ to a cuspidal representation of $GL_n(\A_\Q)$ with similar assumptions.
A main innovation in this paper is to obtain a uniform structure of subgroups in $GL_n(\F_q)$, which enables us to avoid complicated  case by case analysis in \cite{kim-yam}. We also supplement \cite{kim-yam} by showing that we can associate non-holomorphic Siegel modular forms of weight $(2,1)$ to Maass forms for $GL_2(\A_\Q)$ and to 
cuspidal representations of $GL_2(\A_K)$ over imaginary quadratic fields $K$.
\end{abstract}

\maketitle
\tableofcontents

\section{Introduction}
This paper is a continuation of \cite{kim-yam}, where we associated an irreducible complex Galois representation, called Artin representation, $\rho: G_{\Bbb Q}:={\rm Gal}(\bQ/\Q)\longrightarrow GSp_4(\Bbb C)$, to a vector-valued real analytic Siegel cusp form of weight $(2,1)$, under some reasonable assumptions, by generalizing the result of Deligne and Serre \cite{d&s} who associated an odd irreducible Artin representation 
$\rho_f: G_\Q\lra GL_2(\C)$ to any elliptic cusp form $f$ of weight one. Contrary to the case of holomorphic modular forms of weight one, real analytic Siegel cusp forms of weight $(2,1)$ do not have algebro-geometric structures, and thus several assumptions are needed to carry out Deligne-Serre construction. In this paper, we associate an Artin representation to a (unitary) cuspidal representation of $GL_n(\Bbb A_{\Bbb Q})$ with similar assumptions. 

More precisely, let $\pi$ be a (unitary) cuspidal representation of $GL_n(\A_{\Bbb Q})$ with the central character 
$\omega$.
Let $N$ be the conductor of $\pi$ so that $\pi_p$ is unramified for $p\nmid N$. Let $\{\alpha_{1}(p),...,\alpha_{n}(p)\}$ be the Satake parameters for $p\nmid N$. 
Let
$$H_p(T):=(1-\alpha_{1}(p)T)\cdots (1-\alpha_{n}(p)T)=1-a_1(p)T+\cdots+ (-1)^n a_n(p)T^n,
$$ 
be the Hecke polynomial for $p\nmid N$. Then
\begin{eqnarray*}
&& a_1(p)=\alpha_{1}(p)+\cdots+\alpha_{n}(p),\quad a_2(p)=\sum_{1\leq i<j\leq n} \alpha_{i}(p)\alpha_{j}(p),\dots,\\
&& a_m(p)=\sum_{i_1<\cdots<i_m} \alpha_{i_1}(p)\cdots\alpha_{i_m}(p),\cdots, a_n(p)=\alpha_1(p)\cdots \alpha_n(p)=\omega(p).
\end{eqnarray*}

Let $\Bbb Q_\pi=\Bbb Q(a_m(p), m=1,...,n, p\nmid N)$ be the Hecke field of $\pi$. 
Let $K$ be the Galois closure of $\Bbb Q_\pi$, and $\mathcal O_K$ be the ring of integers of $K$. 

We assume the following:
\begin{enumerate}
\item (Finiteness of Hecke fields) $\Bbb Q_\pi$ is a finite extension of $\Bbb Q$, i.e., $K$ is a finite extension of $\Bbb Q$;
\item (Integrality of Hecke polynomials) There exists an integer $M>0$ such that $M a_m(p)\in \mathcal O_K$ for $m=1,...,n$ and $p\nmid N$;
\item (Existence of Galois conjugates) For each $\sigma\in {\rm Gal}(K/\Bbb Q)$, ${}^\sigma \pi$ is a cuspidal representation of $GL_n(\A_\Q)$ with conductor $N_{\sigma}$ such that for $p\nmid NN_\sigma$, the Satake parameters are $\{\sigma(\alpha_{1}(p)),...,\sigma(\alpha_{n}(p))\}$;
\item Existence of mod $\ell$ Galois representation attached to $\pi$;
\item (Rankin-Selberg $L$-functions) For each $m=1,...,n$, and each $\sigma\in {\rm Gal}(K/\Bbb Q)$, 
$$\sum_{p\nmid NN_\sigma} \frac {|\sigma(a_m(p)|^2}{p^s}
\leq C_{n,m}^2 \log\frac 1{s-1}+O(1),\quad \text{as $s\to 1^+$},
$$
where $C_{n,m}=\begin{pmatrix} n\\m\end{pmatrix}:=\ds\frac{n!}{m!(n-m)!}$.
\end{enumerate}

These assumptions (1)-(4) are very natural, and are valid for cuspidal representations attached to elliptic cusp forms of weight one.
Note that we assume that the Satake parameters $\alpha_i(p)$'s themselves are integral. It enables us to show that the Satake parameters take only finitely many values by Assumption (5) (Proposition \ref{density}). Recall that in the case of
holomorphic Siegel cusp forms of weight $(k_1,k_2)$, $k_1\geq k_2\geq 2$, Hecke eigenvalues are algebraic integers, but Satake parameters are twisted by $p^{-\frac {k_1+k_2-3}2}$.
Assumption (5) is also natural. 
The $L$-function $\sum_p \frac {|\sigma(a_m(p)|^2}{p^s}$ is closely related to the Rankin-Selberg $L$-function of the exterior $m$-th power $\wedge^m({}^\sigma \pi)$. In the appendix, we describe the relationship, and show that the Langlands functoriality of the exterior $m$-th power $\wedge^m(\pi)$ as an automorphic representation of $GL_{C_{n,m}}(\Bbb A_\Q)$ implies Assumption (5). 

Then we prove the following main theorem:
\begin{thm} [Main Theorem]\label{artin-rep}
Let $\pi$ satisfy the above five assumptions. 
Then there exists the Artin representation 
$\rho_\pi: G_\Q\lra GL_n(\C)$ which is unramified for $p\nmid N$ such that $$\det(I_n-\rho_\pi({\rm Frob}_p)T)=H_p(T)$$ 
for all $p\nmid N$. Furthermore, $\rho_{\pi}(c)\stackrel{\tiny{GL_n(\C)}}{\sim }{\rm diag}(\epsilon_1,\ldots,\epsilon_n),\ 
\epsilon_i\in\{\pm1\}$ for the complex conjugate $c$, and  
$\pi_\infty\simeq \pi(\epsilon'_1,\ldots,\epsilon'_n)$ where $\epsilon'_i=\begin{cases} 1, &\text{if $\epsilon_i=1$}\\ \text{sgn}, &\text{if $\epsilon_i=-1$}\end{cases}.$
\end{thm}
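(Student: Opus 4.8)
The plan is to build $\rho_\pi$ from the compatible system of mod $\ell$ Galois representations and then promote it to a genuine Artin (finite image, complex) representation by controlling the image uniformly in $\ell$. First I would use assumption (2) to fix, for each prime $\ell$, a semisimple mod $\ell$ representation $\overline{\rho}_{\pi,\ell}\colon G_\Q\to GL_n(\overline{\F}_\ell)$ unramified outside $N\ell$ with $\det(1_n-\overline{\rho}_{\pi,\ell}(\mathrm{Frob}_p)T)\equiv H_p(T)\pmod{\ell}$ for $p\nmid N\ell$; assumptions (3) and (4) guarantee that $H_p(T)$ lies in $\Z[T]$ (after clearing denominators by rationality of Satake parameters) with bounded coefficients, so the characteristic polynomials of all Frobenii take values in a \emph{fixed} finite set. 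The key point is then a boundedness/finiteness argument: the traces of $\overline{\rho}_{\pi,\ell}(\mathrm{Frob}_p)$, being reductions of the fixed algebraic integers appearing in $H_p(T)$, force the image of $\overline{\rho}_{\pi,\ell}$ to be a subgroup of $GL_n(\overline{\F}_\ell)$ whose order is bounded independently of $\ell$ — this is where one invokes a Jordan–Schur / Minkowski-type bound together with the fact that eigenvalues of Frobenius are roots of unity (the Hecke polynomial has integer coefficients and, by Ramanujan-type normalization coming from the exterior-power functoriality of assumption (1), bounded roots, hence roots of unity). Using assumption (5), the Galois conjugates of $\pi$ give a compatible family, which pins down a single number field over which everything is defined.

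Next I would assemble these into a characteristic-zero representation. Having bounded image, standard arguments (lifting the projective image, or a direct limit / Brauer character argument over the finitely many residue characteristics) produce a continuous representation $\rho_\pi\colon G_\Q\to GL_n(\overline{\Q}_\ell)$ with finite image whose reduction recovers $\overline{\rho}_{\pi,\ell}$ and which satisfies $\det(1_n-\rho_\pi(\mathrm{Frob}_p)T)=H_p(T)$ for $p\nmid N$; since the image is finite it is conjugate into $GL_n(\overline{\Q})$ and hence, fixing an embedding $\overline{\Q}\hookrightarrow\C$, into $GL_n(\C)$, giving the desired Artin representation. Uniqueness follows from Chebotarev density: two semisimple representations with the same characteristic polynomials of Frobenius at almost all primes are isomorphic. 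Irreducibility is a separate point one extracts from the cuspidality of $\pi$ together with the exterior-power functoriality hypothesis — if $\rho_\pi$ decomposed, the corresponding decomposition of $L$-functions would contradict cuspidality of $\pi$ (or of the relevant $\wedge^m\pi$).

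Finally, for the statement about the complex conjugation $c$ and $\pi_\infty$: since $c^2=1$, $\rho_\pi(c)$ is diagonalizable over $\C$ with eigenvalues in $\{\pm1\}$, say with multiplicities $(p,q)$, $p+q=n$, so $\rho_\pi(c)\sim\mathrm{diag}(\epsilon_1,\dots,\epsilon_n)$. To match this with the archimedean component I would compare the local $L$- and $\varepsilon$-factors at infinity: the infinity type of $\pi$ is determined by $\pi_\infty$, which under the assumptions (the Hecke polynomials are ``motivic'' of weight $0$, i.e. $\pi$ is of Galois type) must be a principal series $\pi(\epsilon'_1,\dots,\epsilon'_n)$ built from characters $1$ and $\mathrm{sgn}$ of $\R^\times$, and the local Langlands correspondence at $\R$ sends such a representation to $\bigoplus_i \epsilon'_i$, whose value at $c$ is exactly $\mathrm{diag}(\epsilon_1,\dots,\epsilon_n)$ with the stated dictionary $\epsilon_i=1\leftrightarrow\epsilon'_i=1$, $\epsilon_i=-1\leftrightarrow\epsilon'_i=\mathrm{sgn}$. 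Concretely I would show $L(s,\pi_\infty)=L(s,\rho_\pi|_{G_\R})$ using that both sides are forced by the global functional equation and the already-established equality of $L$-functions at the finite places, then read off the sign data.

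The main obstacle I anticipate is the uniform bound on $|\mathrm{image}(\overline{\rho}_{\pi,\ell})|$ independent of $\ell$: one must rule out larger and larger image as $\ell\to\infty$, and this requires combining the integrality/boundedness of the Hecke polynomials (assumptions (3), (4)) with the Ramanujan-type bound on Satake parameters supplied by the exterior-power functoriality (assumption (1)) to conclude Frobenius eigenvalues are roots of unity of bounded order — everything downstream (existence, finiteness of image, the descent to $GL_n(\C)$, and the archimedean matching) is comparatively formal once that is in hand.
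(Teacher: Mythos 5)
Your overall skeleton (mod $\ell$ representations $\to$ uniform bound on the image $\to$ Schur--Zassenhaus-type lift to characteristic zero $\to$ Chebotarev to glue and to remove ramification at $\ell$ $\to$ read off $\rho_\pi(c)$ and match $\pi_\infty$ via the equality of archimedean $L$-factors) is the same as the paper's, but the step you yourself flag as the main obstacle --- the bound on $|\mathrm{Im}\,\overline{\rho}_{\pi,\ell}|$ independent of $\ell$ --- is exactly where your argument breaks down, in two ways. First, you claim that assumptions (3)--(4) together with the exterior-power functoriality give that the $H_p(T)$ have bounded coefficients, hence that Frobenius eigenvalues are roots of unity, at \emph{all} unramified $p$. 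That is essentially assuming the Ramanujan conjecture, which in this paper is a \emph{corollary} of the theorem, not an input. What functoriality of $\wedge^m(\pi)$ actually yields, via the Rankin--Selberg estimate $\sum_p |a_m(p)|^2 p^{-s} \le C_{n,m}^2\log\frac{1}{s-1}+O(1)$, is only an on-average bound: Proposition \ref{density} shows the tuples $(a_1(p),\dots,a_n(p))$ take finitely many values outside an exceptional set $X_\eta$ of primes of upper Dirichlet density at most $\eta$, not at every prime. Consequently the image $G_\ell$ is only known to satisfy the property $C(\eta,M)$ of Section \ref{finite-groups} (via Chebotarev, as in Lemma \ref{condition}): a subset of proportion $\ge 1-\eta$ of $G_\ell$ has characteristic polynomials drawn from a fixed finite list.

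Second, even granting such control, your appeal to a ``Jordan--Schur / Minkowski-type bound'' does not apply in residue characteristic $\ell$: Jordan's theorem (Theorem \ref{Jordan}) is a characteristic-zero statement, and it fails for subgroups of $GL_n(\F_\ell)$ whose order is divisible by $\ell$ (e.g.\ $SL_n(\F_\ell)$ has no abelian normal subgroup of bounded index), while you cannot assume $\ell\nmid |G_\ell|$ before the image has been bounded --- that is circular. Deducing $|G_\ell|\le A(\eta,M,n)$ from the property $C(\eta,M)$ is precisely the hard group-theoretic content of the paper (Theorem \ref{main1} and Corollary \ref{finiteness}): one needs the Larsen--Pink structure theorem to split off a product of groups of Lie type in characteristic $\ell$, Suprunenko's theorem to kill the unipotent radical, and the counting estimate of Proposition \ref{main} (number of elements of a simple group of Lie type with a prescribed characteristic polynomial is about $|\mathcal G(\F_q)|/q^{l}$) to show that a bounded number of characteristic polynomials on a $(1-\eta)$-proportion of the group forces $q$, and hence $|G_\ell|$, to be bounded. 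Without this (or an equivalent substitute), the existence, the lift to characteristic zero, and everything downstream in your proposal is unsupported. The remaining parts of your sketch (Schur--Zassenhaus lifting once $\ell>|G_\ell|$, Chebotarev for uniqueness and for unramifiedness at $\ell$, and the archimedean matching via $L(s,\pi_\infty)=L(s,\rho_\infty)$, which is the cited Proposition of Martin's appendix) do agree with the paper's argument.
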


As a corollary, we obtain that the above assumptions on $\pi$ imply the Ramanujan conjecture for $\pi$, namely, 
$\pi_p$ is tempered for all $p$.

The assumptions force $\pi_\infty$ to be a principal series representation of the form 
$\pi(\epsilon'_1,\ldots,\epsilon'_n)$. Naively we hope that this kind of automorphic 
representation $\pi$ should satisfy the above assumptions (1)-(5). However,  
unlike holomorphic modular forms in $GL_2(\A_\Bbb Q)$ case where one can use algebraic geometry as Deligne and Serre did,
it seems difficult in general $GL_n$ case to verify 
whether a given $\pi$ satisfies these strong assumptions. Note that in holomorphic modular forms in $GL_2$ case, $\pi_{\infty}$ is a limit of discrete series. But in the case of $GL_n$, $n\geq 3$, $\pi_{\infty}$ is not a limit of discrete series (cf. \cite{Kn}).

One key ingredient in Deligne-Serre construction is to obtain bounds of orders of certain subgroups of $GL_2(\Bbb F_{\ell^n})$ for any odd prime $\ell$. In \cite{d&s}, it was done by classification of semisimple subgroups of $GL_2(\Bbb F_{\ell^n})$ and case by case analysis. In \cite{kim-yam}, we carried out the same thing by using the classification of semisimple subgroups of $GSp_4(\Bbb F_{\ell^n})$ and case by case analysis.
A main innovation in this paper is to prove some structure theorem (see Section \ref{finite-groups}) 
for semisimple subgroups of $GL_n(\Bbb F_q)$ for any finite field $\F_q$ by using result of Larsen-Pink \cite{lp} 
combined with the appendix in \cite{ghtt}. 
This enables us to generalize Proposition 7.2 of \cite{d&s} to general linear groups without explicit forms of 
subgroups in question at hand. So we can avoid complicated case by case analysis as we have done in \cite{kim-yam}.
We remark that this is not at all obvious from the existing results in the literature (cf. \cite{lp} and \cite{N}).

The organization of this paper is as follows. In Section 2, we investigate the infinity type of an  
automorphic representation of $GL_n(\A_\Q)$ which gives rise to 
an Artin representation. 
In Section 3, 
we apply the Rankin-Selberg method to prove that the number of Satake parameters outside a certain infinite set of primes is finite. 
By using the results in Section 4 with the finiteness of Satake parameters, we bound the size of the image of mod $\ell$ Galois representations, provided that they exist. 
The formulation of the conjecture for the existence of such mod $\ell$ Galois representations is given in Section 5. 
The proof of the main theorem is given in Section 6 following Deligne-Serre. 

In Sections 7 and 8, we recall our previous work \cite{kim-yam} for the case of $GSp_4/\Q$. 
We discuss a relation between non-holomorphic Siegel modular forms and holomorphic Siegel modular forms.    
After the completion of the previous work, we realized that we do not need to use the unproven hypothesis on 
the existence of the weak transfer from $GSp_4$ to $GL_4$ as in \cite{arthur}. We explain how to get around this. 
Finally in Sections 9 and 10, we associate non-holomorphic Siegel modular forms of weight $(2,1)$ to
automorphic representations of $GL_2$ over imaginary quadratic fields, and Maass forms for $GL_2(\A_\Q)$.

\smallskip

\textbf{Acknowledgments.} We would like to thank R. Guralnick and F. Herzig for helpful discussions. 
We thank the referee for the careful reading of the paper and many comments.

\section{Infinity type of Artin representation}

We show that the cuspidal representation $\pi$ of $GL_n(\A_\Q)$ which we are considering has a very special infinity type $\pi_{\infty}$.

\begin{prop}\label{infinity} 
Let $\rho: G_\Q\longrightarrow GL_n(\C)$ be an irreducible continuous
Galois representation, and let $\pi$ be a cuspidal automorphic representation of $GL_n(\Bbb A_\Q)$
such that 
$$\det(I_n-\rho_F({\rm Frob}_p)T)=H_p(T)$$ 
for all $p\nmid N$, where $N$ is the conductor of $\pi$. Then $\pi_{\infty}$ is the full induced representation $\pi(\epsilon'_1,...,\epsilon'_n)$, where $\rho_{\pi}(c)\stackrel{\tiny{GL_n(\C)}}{\sim }{\rm diag}(\epsilon_1,\ldots,\epsilon_n),\ 
\epsilon_i\in\{\pm1\}$ for the complex conjugate $c$, and
$\epsilon'_i=\begin{cases} 1, &\text{if $\epsilon_i=1$}\\ \text{sgn}, &\text{if $\epsilon_i=-1$}\end{cases}.$
\end{prop}
\begin{proof} From the assumption, it is clear that
$L(s,\pi_p) = L(s,\rho_p)$ for almost all $p$. Then by Proposition A.1 of \cite{martin},
$L(s, \pi_p) = L(s, \rho_p)$ for all $p$, and $L(s,\pi_\infty)=L(s,\rho_\infty)$. 
Since $\rho_{\pi}(c)\stackrel{\tiny{GL_n(\C)}}{\sim }{\rm diag}(\epsilon_1,\ldots,\epsilon_n),\ 
\epsilon_i\in\{\pm1\}$ for the complex conjugate $c$, the Langlands parameter of $\pi_{\infty}$ is 
$$\phi: W_{\Bbb R}=\Bbb C^\times\cup j \Bbb C^{\times}\longrightarrow GL_n(\Bbb C),\quad \phi(z)=Id,\quad 
\phi(j)=\diag(\epsilon_1,...,\epsilon_n),
$$
where $\epsilon_i\in \{\pm1\}$. Our result follows from this observation. 
\end{proof}

\section{Application of the Rankin-Selberg method}

We prove that Satake parameters take only finitely many values under Assumptions (1)-(3) and (5) in the introduction.

\begin{prop}\label{density} Suppose $\pi=\otimes'_p \pi_p$ is a cuspidal representation of $GL_n(\A_\Q)$ with conductor $N$ which satisfies Assumptions (1)-(3) and (5) in the introduction. Then for any positive integer $\eta$, there exists a set $X_{\eta}$ of rational primes such that den.sup$X_{\eta}\leq \eta$, and the set $\{(a_1(p),...,a_n(p))\, |\, p\notin X_{\eta}\}$ is a finite set, or equivalently, $\{ \text{Satake parameters at $p$}\, |\, p\notin X_{\eta}\}$ is finite.
\end{prop}
Here den.sup$X_{\eta}$ is defined by
$$\limsup_{s\to 1^+} \frac {\sum_{p\in X_{\eta}} p^{-s}}{\log \frac 1{s-1}}.
$$
We also define the Dirichlet density den$(X_{\eta})$ by 
$$\lim_{s\to 1^+} \frac {\sum_{p\in X_{\eta}} p^{-s}}{\log \frac 1{s-1}}.
$$
\begin{proof} 
By Assumption (3), for each $\sigma\in {\rm Gal}(K/\Bbb Q)$, ${}^\sigma \pi$ is a cuspidal representation of $GL_n(\A_\Q)$ with the conductor $N_{\sigma}$ such that for $p\nmid NN_{\sigma}$, the Satake parameters are $\{\sigma(\alpha_{1}(p)),...,\sigma(\alpha_{n}(p))\}$. 
Hence for each $m=1,...,n$ and each $\sigma$, by Assumption (5),
$$\sum_{p\nmid NN_\sigma} \frac {|\sigma(a_m(p)|^2}{p^s}
\leq C_{n,m}^2 \log\frac 1{s-1}+O(1),\quad \text{as $s\to 1^+$}.
$$
Let $N_K=\prod_{\sigma\in {\rm Gal}(K/\Bbb Q)} N_\sigma$.

By Assumption (2), 
there exists an integer $M>0$ so that 
$Ma_m(p)\in \mathcal{O}_K$ if $p\nmid N$. 
For $c>0$, consider two sets:
\begin{eqnarray*}
Y(c)&=&\{ \text{$a\in\mathcal O_K \,| \, |\sigma(a)|^2\leq c$ for any $\sigma\in {\rm Gal}(K/\Bbb Q)$}\},\\
X(c)&=&\{\text{$p\, |\,$  at least one of $Ma_m(p)$, $m=1,...,n$, does not belong to $Y(c)$, or $p| N_K$}\}.
\end{eqnarray*}

Note that since $\mathcal O_K$ is a lattice in $K$, $Y(c)$ is a finite set for any $c>0$. 
Hence the set $\{ (Ma_1(p),...,Ma_n(p))\, |\,\, p\notin X(c)\}$
is finite, and so the set $\{ (a_1(p),...,a_n(p)) \,|\, p\notin X(c)\}$ is finite. 

Let $r=[K:\Bbb Q]$. 
If $p\in X(c)$ and $p\nmid N_K$, there exists $m$ such that $|\sigma_m(Ma_m(p))|^2>c$ for some $\sigma_m\in {\rm Gal}(K/\Bbb Q)$. Hence
$$c\sum_{p\in X(c)} p^{-s}\leq \sum_{m=1}^n \sum_{\sigma}\sum_{p\nmid N_K} \frac {|\sigma(Ma_m(p))|^2}{p^s}+O(1)\leq \left(\sum_{m=1}^n C_{n,m}^2\right) rM^2\log\frac 1{s-1}+O(1), \quad \text{as $s\to 1^+$}.
$$
Therefore, den.sup$X(c)\leq \frac {rM^2}c \left(\sum_{m=1}^n C_{n,m}^2\right)$. Take $c$ such that 
$c\geq \frac {rM^2 \sum_{m=1}^n C_{n,m}^2}{\eta}$, and let $X_\eta=X(c)$.
\end{proof}

\section{Bounds for the orders of certain subgroups of $GL_n(\F_q)$}\label{finite-groups}

Fix a positive integer $n\ge 1$. 
Let $q$ be a power of a rational prime $p$ and $\F_q$ be the finite field with $q$ elements. 
Let $G$ be a subgroup of ${\rm GL}_n(\F_q)$. We say $G$ is semisimple if 
the natural action of $G$ on $V:=\F^{\oplus n}_q$ is semisimple or equivalently $V$ is a semisimple 
$G$-module. We say $G$ is an irreducible (resp. absolutely irreducible) subgroup of  ${GL}_n(\F_q)$ 
if $V$ (resp. $V\otimes_{\F_q}\overline{\F}_q$) is an irreducible $G$-module. 
As in \cite{d&s}, for positive constants $N$ and $\eta,\ (0<\eta<1)$, we introduce the following property $C(\eta,N)$ for $G$:
$$
C(\eta,N):\ \mbox{there exists a subset $H$ of $G$ such that}\ 
\left\{\begin{array}{ll}
(i)\  (1-\eta)|G| \le |H|, \\
(ii)\  |\{\det(1-hT)\in \F_q[T]|\ h\in H \}|\le N.
\end{array}\right. 
$$

\begin{thm}\label{lp}[Theorem 0.2 of \cite{lp}] There exists a constant $J_1(n)$, depending only on $n$ 
such that any finite subgroup $G$ of $GL_n(\F_q)$ possesses a series    
$G=G_0\supset G_1\supset G_2 \supset G_3$ of subgroups such that $G_{i}$ is normal in $G_{i-1}$ for each $1\le i\le 3$ and it satisfies   
\begin{enumerate}
\item $[G:G_1]\le J_1(n)$;
\item $G_1/G_2$ is a direct product of finite simple groups of Lie type in characteristic $p$  
and the number of direct factors is bounded uniformly in $n$;
\item $G_2/G_3$ is abelian of order not divisible by $p$; 
\item $G_3$ is a $p$-group.
\end{enumerate}
\end{thm}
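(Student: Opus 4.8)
This is Theorem 0.2 of Larsen--Pink \cite{lp} (in the form needed here; an additional point is supplied by the appendix to \cite{ghtt}); it is a deep result that we use as a black box, and I only indicate the shape of the argument one would follow. First one reduces to an algebraically closed field of characteristic $p$: any finite subgroup of $GL_n$ over a field of characteristic $p$ is conjugate into $GL_n(\F_q)$ for a suitable power $q$ of $p$, hence into $GL_n(\bFp)$, with all constants depending only on $n$. One then proceeds by induction on $n$, organized by the generalized Fitting subgroup: after replacing $G$ by a normal subgroup of bounded index (Jordan-type arguments handle the $\ell\neq p$, ``solvable'' layers) one isolates the components of $G$, i.e.\ its subnormal quasisimple subgroups, and shows that, modulo a normal $p$-subgroup $G_3$ and a central prime-to-$p$ abelian layer $G_2/G_3$, what is left is a direct product of finite simple groups of Lie type in characteristic $p$.

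The technical heart --- and the reason the theorem is genuinely hard --- is to prove, \emph{without} the classification of finite simple groups (a precursor, valid for $p$ large relative to $n$, is due to Nori \cite{N}), that an absolutely irreducible finite subgroup $\Gamma\subseteq GL_n(\bFp)$ with no nontrivial normal $p$-subgroup and no nontrivial normal abelian prime-to-$p$ subgroup is, up to bounded index, a group of Lie type in characteristic $p$ of bounded rank. The mechanism in \cite{lp} is to attach to $\Gamma$ an algebraic envelope: via estimates on the degrees of the equations that can cut out the Zariski closure of a subset of $GL_n$ (applied to $\Gamma$ acting on $\mathrm{End}(V)$ and its tensor powers) one shows that this closure cannot be too small relative to $\Gamma$; analyzing in addition the subgroup of $\Gamma$ generated by its elements of order $p$, one forces $\Gamma$ to be sandwiched between $H(\bFp)$ and a subgroup of bounded index, for a semisimple $H\subseteq GL_n$. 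Keeping every dimension, degree, and index estimate uniform in $n$ then yields the uniform bounds in (1)--(4), including the bound on the number of direct factors in (2).

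The single hardest point is exactly this classification-free input: showing that ``most'' of a finite linear group in characteristic $p$ is accounted for by groups of Lie type, with all constants depending only on $n$. Granting it, the remaining assembly is comparatively formal: Clifford theory to pass between irreducible and arbitrary representations, Jordan's theorem and its characteristic-$p$ refinements to strip off the abelian part, and bookkeeping to control the successive indices $[G:G_1]$ and $[G_1:G_2]$ and the structure of $G_2/G_3$ and of $G_3$.
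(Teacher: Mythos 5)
Your treatment matches the paper's: this result is not proved there either, but simply quoted as Theorem 0.2 of \cite{lp}, supplemented by a remark that the uniform bound on the number of direct factors in (2) is not in the published statement but is implicit in Larsen--Pink's proof, and your sketch of their argument is consistent with that. The only slight inaccuracy is attributing the extra point to the appendix of \cite{ghtt}; the paper invokes that appendix later (in the proof of Proposition \ref{structure}), while the direct-factor bound here is credited to the proof in \cite{lp} itself.
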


\begin{remark} In the above theorem, the boundedness of the number of direct factors is not in the statement of Theorem 0.2 of \cite{lp}.
However, it is implicit in the proof of the theorem. It is important for our purpose.
\end{remark}

\begin{thm}\label{sup}[Chap. V, Section 19, Th. 7, of \cite{sup}] There exists a constant $J_2(n)$, depending only on $n$ 
such that any solvable subgroup $G$ of $GL_n(\F_q)$ possesses a normal subgroup $N$ such that 
\begin{enumerate}
\item $[G:N]\le J_2(n)$, 
\item  $N$ is conjugate to a subgroup of the group of upper triangular matrices in $GL_n(\F_q)$. 
\end{enumerate}
\end{thm}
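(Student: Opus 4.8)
The plan is to deduce the statement from the Larsen--Pink structure theorem (Theorem \ref{lp}), which is already at our disposal. Before starting, one point of reading: I would take triangularizability over $\bF_q$, i.e. produce a normal subgroup $N\trianglelefteq G$ whose index is bounded in terms of $n$ alone and which stabilizes a complete flag of $V\otimes_{\F_q}\bF_q$, equivalently is conjugate inside $GL_n(\bF_q)$ into the upper triangular Borel; this is the shape in which the result is used, and over $\F_q$ itself a bound depending only on $n$ cannot hold (witness $\F_{q^2}^\times\subset GL_2(\F_q)$). I will also use freely that ``having a normal subgroup of index $\le c(n)$ with a given property'' is stable under finite intersections and under passage to normal cores, the bound remaining a function of $n$.

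So let $G\subset GL_n(\F_q)$ be solvable. First I would apply Theorem \ref{lp}: there are normal subgroups $G\supset G_1\supset G_2\supset G_3$ with $[G:G_1]\le J_1(n)$, with $G_1/G_2$ a direct product of finite simple groups of Lie type in characteristic $p$, with $G_2/G_3$ abelian of order prime to $p$, and with $G_3$ a $p$-group. Since $G$ is solvable, the subquotient $G_1/G_2$ is solvable; since it is also a direct product of nonabelian finite simple groups of Lie type, and such a product is solvable only if it is trivial, we get $G_1=G_2$. Thus $G_1$ has a normal $p$-subgroup $G_3$ with abelian prime-to-$p$ quotient $G_1/G_3$, and $[G:G_1]\le J_1(n)$.

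The heart of the matter is then that $N:=G_1$ already stabilizes a complete flag of $\overline V:=V\otimes_{\F_q}\bF_q$, so that one may take $J_2(n):=J_1(n)$. I would build the flag greedily. Since $\mathrm{char}\,\bF_q=p$ and $G_3$ is a finite $p$-group, the fixed space $W:=\overline V^{\,G_3}$ is nonzero whenever $\overline V\ne 0$, and it is $G_1$-stable because $G_3\trianglelefteq G_1$. On $W$ the group $G_3$ acts trivially, so $G_1$ acts on $W$ through the abelian prime-to-$p$ quotient $G_1/G_3$; over the algebraically closed field $\bF_q$ such a module is a direct sum of one-dimensional submodules, hence $G_1$ stabilizes a complete flag of $W$. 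Replacing $\overline V$ by $\overline V/W$ --- on which $G_3$ is again a finite $p$-group and hence again has nonzero invariants --- and iterating at most $n$ times, I assemble a $G_1$-stable complete flag of $\overline V$. Equivalently, every $G_1$-composition factor of $\overline V$ is one-dimensional, which is exactly triangularizability of $G_1$.

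On this route the only genuine input is Theorem \ref{lp}, and granting it there is nothing hard left. If one instead wants the self-contained argument of \cite{sup}, I would induct on $n$: the reducible case follows by splicing flags produced on a proper submodule and on the quotient; the imprimitive irreducible case $\overline V=\bigoplus_{i=1}^t V_i$ with $t\ge 2$ reduces, via the kernel of the permutation action $G\to S_t$ (of index $\le t!\le n!$), to the lower-dimensional blocks; and the primitive irreducible case is the crux: there one bounds $[G:Z]$ in terms of $n$ for $Z:=G\cap\bF_q^\times$, using that $O_p(G)=1$ (a nontrivial normal $p$-subgroup has nonzero invariants in characteristic $p$, hence cannot act irreducibly for $n\ge 2$), that by primitivity the Fitting subgroup $F(G)$ is a $p'$-group acting homogeneously and so is of symplectic type with $|F(G)/Z(F(G))|$ a square dividing $n^2$ and with $Z(F(G))$ scalar, and that $C_G(F(G))=Z(F(G))$ (Fitting's theorem) forces $G/Z(F(G))\hookrightarrow\mathrm{Aut}(F(G))$ with image of order controlled by $|F(G)/Z(F(G))|$. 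In that second approach the primitive case is where all the work sits; in the Larsen--Pink approach that work has already been done for us.
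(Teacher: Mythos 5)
The paper offers no proof of this statement at all: it is quoted from Suprunenko (Mal'cev's theorem on solvable linear groups), so there is nothing internal to compare your argument against; judged on its own, your main argument is correct. Your preliminary reinterpretation is also the right one: as stated, with triangularization inside $GL_n(\F_q)$, the theorem cannot hold with a bound depending only on $n$, and your witness works --- any subgroup conjugate into the $\F_q$-rational Borel has all eigenvalues in $\F_q$, so for the nonsplit torus $\F_{q^2}^\times\subset GL_2(\F_q)$ such a normal subgroup is contained in $\F_q^\times$ and has index at least $q+1$; the version over $\bF_q$ is exactly what Corollary \ref{trivial} uses, since unipotence of the normal $p$-subgroup is insensitive to the field over which one triangularizes. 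The deduction from Theorem \ref{lp} is sound: solvability forces $G_1=G_2$ because the Lie-type layer is a product of nonabelian simple groups, and then the standard fixed-vector argument for the $p$-group $G_3$ together with diagonalizability of the abelian prime-to-$p$ quotient $G_1/G_3$ over $\bF_q$ produces a $G_1$-stable complete flag by induction on dimension, so one may take $J_2(n)=J_1(n)$; this is not circular within the paper, since Theorem \ref{lp} is itself invoked as a black box independent of \cite{sup}, and nothing downstream (the choice of $C(n)$, Corollaries \ref{trivial} and \ref{trivial1}) is affected. The trade-off versus the cited source is that you purchase a classical, elementary theorem with the much deeper Larsen--Pink machinery --- economical here because Theorem \ref{lp} is already assumed, but not self-contained; the route actually taken in \cite{sup} is the one in your closing sketch (reducible/imprimitive/primitive trichotomy, with the symplectic-type Fitting subgroup analysis and $C_G(F(G))=Z(F(G))$ in the primitive case), which you present only in outline --- in particular the scalarity of $Z(F(G))$ and the bound on the relevant subgroup of ${\rm Aut}(F(G))$ would need the details you allude to --- but your primary proof does not rely on that sketch.
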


\begin{cor}\label{trivial} Assume that $p>J_2(n)$. Let $G$ be any semisimple, solvable subgroup of $GL_n(\F_q)$. 
If $G'$ is a normal $p$-subgroup of $G$, then 
$G'$ is trivial.
\end{cor}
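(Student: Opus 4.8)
The plan is to combine Theorem~\ref{sup} with the semisimplicity hypothesis. First I would apply Theorem~\ref{sup} to the solvable group $G$: there is a normal subgroup $N \trianglelefteq G$ with $[G:N] \le J_2(n)$ and $N$ conjugate to a subgroup of the upper triangular matrices in $GL_n(\F_q)$. After conjugating (which changes none of the hypotheses, since semisimplicity and normality of $G'$ are conjugation-invariant), I may assume $N$ itself is upper triangular.

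Next I would analyze the $p$-group $G'$. Since $G' \trianglelefteq G$ and $[G:G'\cap N] \le [G:N] \le J_2(n) < p$, and $G'$ is a $p$-group, the index $[G':G'\cap N]$ divides both $|G'|$ (a power of $p$) and $[G:N]$ (which is $< p$ and in particular coprime to $p$), so $[G':G'\cap N] = 1$, i.e. $G' \subseteq N$. Thus $G'$ is a normal $p$-subgroup of $G$ sitting inside the upper triangular matrices. The $p$-subgroups of the upper triangular group lie in the group $U$ of unipotent upper triangular matrices (the Sylow $p$-subgroup of the Borel), so every element of $G'$ is unipotent.

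Then I would invoke semisimplicity of $G$ to kill $G'$. Let $W \subseteq V = \F_q^{\oplus n}$ be the subspace of vectors fixed by all of $G'$; since $G'$ consists of unipotent elements, $W \ne 0$ (a nontrivial $p$-group acting on a nonzero $\F_p$-vector space, hence on $V$, has nonzero fixed points — one can see this from the orbit-counting / class equation mod $p$, or from the fact that $G'$ is conjugate into $U$ and $U$ fixes $e_1$). Because $G'$ is normal in $G$, the subspace $W$ is $G$-stable. By semisimplicity of the $G$-module $V$, there is a $G$-stable complement $W'$ with $V = W \oplus W'$, and $W'$ is again a semisimple $G$-module, hence a semisimple $G'$-module; but $G'$ acts trivially on $W$ and unipotently on $W'$, and a semisimple module over a $p$-group in characteristic $p$ is a direct sum of trivial modules (the group algebra $\F_q[G']$ is local with unique simple module the trivial one). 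So $G'$ acts trivially on $W'$ too, hence on all of $V$, forcing $G'$ to be trivial since $G \subseteq GL_n(\F_q)$ acts faithfully.

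The only genuinely delicate point is the reduction $G' \subseteq N$, which rests on the arithmetic coincidence that $[G:N]$ is bounded by $J_2(n) < p$; once one is inside the Borel, the rest is the standard fact that in characteristic $p$ a nontrivial $p$-group cannot act both faithfully and semisimply on a finite-dimensional vector space. I would expect to spend a sentence justifying that a $p$-subgroup of the upper triangular group is unipotent (diagonalizable-plus-$p$-power-order forces eigenvalues to be $p$-power roots of unity in $\overline{\F}_q^\times$, a group with no $p$-torsion, so all eigenvalues are $1$) and a sentence on the nonvanishing of fixed points, but neither is an obstacle.
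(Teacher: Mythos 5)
Your proposal is correct in substance and follows essentially the same route as the paper: apply Theorem~\ref{sup} to put a normal subgroup $N$ of index at most $J_2(n)<p$ inside the upper triangular matrices, deduce $G'\subseteq N$ because that index is prime to $p$, observe that a $p$-subgroup of the triangular group is unipotent, and then let semisimplicity force $G'=1$. One caveat on your last step: the assertion that $W'$ is ``a semisimple $G$-module, hence a semisimple $G'$-module'' is not a formality --- restriction to an arbitrary subgroup does not preserve semisimplicity (restrict to a Sylow $p$-subgroup for a counterexample); what makes it true here is that $G'$ is \emph{normal} in $G$, i.e.\ Clifford's theorem, which is precisely the citation the paper uses (it applies Clifford to $V$ itself: $V|_{G'}$ is semisimple, every irreducible module of a $p$-group in characteristic $p$ is trivial, so $G'$ acts trivially and faithfulness gives $G'=1$). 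If you want to avoid Clifford altogether, your own fixed-point observation closes the gap more cheaply: if $W'\neq 0$, the $p$-group $G'$ has a nonzero fixed vector in $W'$, contradicting $W'\cap V^{G'}=0$; hence $W'=0$, $V=V^{G'}$, and $G'=1$. Finally, the inequality $[G:G'\cap N]\le [G:N]$ as written is backwards (passing to a smaller subgroup only increases the index), but this is harmless, since what your argument actually uses is that $[G':G'\cap N]$ divides both $|G'|$ and $[G:N]$, which is correct.
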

\begin{proof} By Theorem \ref{sup}, there exists a normal subgroup $N$ such that 
$[G:N]<p$. Hence the $p$-group $G'$ is a subgroup of $N$. 
By Theorem \ref{sup}, $N$ is conjugate to a subgroup of the group of upper triangular matrices 
and in particular we may assume that $G'$ is a subgroup consisting of unipotent,  upper triangular matrices.
By Clifford's theorem \cite{A}, p. 17, $G'$ is semisimple. Hence $G'=1$.
\end{proof}

\begin{cor}\label{trivial1} Assume that $p>J_2(n)$. Let $G, G_1, G_2, G_3$ be as in Theorem \ref{lp}. Then $G_3=1$.
\end{cor}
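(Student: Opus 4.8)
The plan is to reduce the statement to Corollary \ref{trivial} by identifying a subquotient of $G$ that is at once semisimple, solvable, and contains (a copy of) $G_3$ as a normal $p$-subgroup. First I would recall from Theorem \ref{lp} that $G_3$ is a normal $p$-subgroup of $G$ sitting at the bottom of the chain $G\supset G_1\supset G_2\supset G_3$, with $G_2/G_3$ abelian of order prime to $p$ and $[G:G_1]\le J_1(n)$. The natural candidate to which to apply Corollary \ref{trivial} is $G_2$ itself: since $G_2/G_3$ is abelian and $G_3$ is a $p$-group, hence nilpotent, $G_2$ is solvable; and $G_3\trianglelefteq G$ implies $G_3\trianglelefteq G_2$, so $G_3$ is a normal $p$-subgroup of $G_2$. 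The one remaining hypothesis to verify is that $G_2$ is \emph{semisimple} as a subgroup of $GL_n(\F_q)$.

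The semisimplicity of $G_2$ is the crux, and it is where I expect the real work to lie. Here I would invoke Clifford theory: if $G$ acts semisimply on $V=\F_q^{\oplus n}$, then the restriction of $V$ to any normal subgroup is again semisimple (this is exactly the Clifford-type statement already cited in the proof of Corollary \ref{trivial}, from \cite{A}, p. 17). Since $G_2\trianglelefteq G_1\trianglelefteq G$, and normality and semisimple-restriction propagate down a chain of normal subgroups — restrict from $G$ to $G_1$, then from $G_1$ to $G_2$ — the module $V$ is semisimple as a $G_2$-module, i.e. $G_2$ is a semisimple subgroup of $GL_n(\F_q)$. (If one is worried that the $G_i$ are only asserted to be normal in $G$, not successively normal in one another, one notes that a subgroup normal in $G$ is a fortiori normal in any subgroup of $G$ containing it, so $G_2\trianglelefteq G_1$ and $G_3\trianglelefteq G_2$ both hold automatically.)

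With $G_2$ now known to be semisimple, solvable, and to contain the normal $p$-subgroup $G_3$, and with the standing hypothesis $p>J_2(n)$ in force, Corollary \ref{trivial} applies verbatim to $G_2$ and yields $G_3=1$. The main obstacle, as noted, is purely the semisimplicity bookkeeping — making sure that the hypothesis of Corollary \ref{trivial} that "$G$ be semisimple" is inherited by $G_2$; everything else is a direct citation of the two preceding corollaries. No new estimate or group-theoretic input beyond Clifford's theorem and Theorem \ref{sup} (already used) is needed.
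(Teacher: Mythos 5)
Your proposal is correct and follows essentially the same route as the paper, whose proof is the one-liner ``Since $G_2$ is solvable, Corollary \ref{trivial} implies the assertion'': you simply make explicit the point the paper leaves implicit, namely that $G_2\trianglelefteq G$ together with Clifford's theorem (and the standing semisimplicity of $G$ in the context where the corollary is used) gives the semisimplicity of $G_2$ needed to invoke Corollary \ref{trivial}. No further comment is needed.
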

\begin{proof} Since any $p$-group is solvable, so is $G_2$. Then Corollary \ref{trivial} implies the assertion.
\end{proof}

\begin{thm}\label{Jordan}[cf. Theorem 0.1 of \cite{lp}] Let $k$ be an algebraically closed field of characteristic zero. For every $n$, there exists a constant $J_3(n)$, depending only on $n$ 
such that any finite subgroup $G$ of $GL_n(k)$ possesses an abelian normal subgroup $A$ such that 
$[G:A]\le J_3(n)$. 
\end{thm}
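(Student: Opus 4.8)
The plan is to deduce this characteristic-zero statement from the positive-characteristic structure theorem (Theorem \ref{lp}) by reduction modulo a rational prime $\ell$ coprime to $|G|$; this will in fact give $J_3(n)=J_1(n)$. First I would replace $k$ by a small field: by Brauer's induction theorem the representation $G\hookrightarrow GL_n(k)$ is, after conjugation, defined over the cyclotomic field $F=\Q(\zeta_{|G|})$, which we view inside $k$, so $G\subset GL_n(F)$ with $F$ a number field. Pick a rational prime $\ell\nmid |G|$ and a prime $\lambda$ of $F$ over $\ell$. The $\O_F$-module $\Lambda=\sum_{g\in G}g\cdot \O_F^{\oplus n}$ inside $F^{\oplus n}$ is $G$-stable of full rank, hence free of rank $n$ over the localization $\O_{F,\lambda}$; choosing an $\O_{F,\lambda}$-basis realizes $G\subset GL_n(\O_{F,\lambda})$. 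Reduction modulo $\lambda$ then gives a homomorphism $r\colon G\to GL_n(\kappa)$, where $\kappa$ is a finite field of characteristic $\ell$. The kernel of $r$ lies in the congruence subgroup $1+\lambda M_n(\O_{F,\lambda})$, all of whose finite subgroups are $\ell$-groups; since $\ell\nmid|G|$, this forces $\ker r=1$, so $r$ embeds $G$ into $GL_n(\F_q)$ for some power $q$ of $\ell$.

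Now I apply Theorem \ref{lp} to $G\subset GL_n(\F_q)$ with defining characteristic $p=\ell$, producing normal subgroups $G\supset G_1\supset G_2\supset G_3$ as in its statement. Because $G_3$ is an $\ell$-group and $|G_3|$ divides $|G|$, we get $G_3=1$. Because $G_1/G_2$ is a direct product of finite simple groups of Lie type in characteristic $\ell$, and every nontrivial such group has order divisible by $\ell$ (it contains a nontrivial unipotent element), while $|G_1/G_2|$ divides $|G|$ and $\ell\nmid|G|$, we get $G_1/G_2=1$, i.e. $G_1=G_2$. By item (3) of Theorem \ref{lp}, $G_1=G_2/G_3$ is abelian. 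Since $[G:G_1]\le J_1(n)$, the subgroup $A:=G_1$ is an abelian normal subgroup of $G$ of index at most $J_1(n)$, and $J_3(n):=J_1(n)$ works. (If one prefers, choosing $\ell>J_2(n)$ lets one invoke Corollary \ref{trivial1} to get $G_3=1$, but this refinement is not needed.)

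No step presents a genuine obstacle; the content is entirely in Theorem \ref{lp}. The points requiring care are routine: that a finite subgroup of $GL_n(k)$ is, up to conjugacy, defined over a number field; that reduction at a prime over $\ell\nmid|G|$ is faithful (the congruence kernel has no nontrivial prime-to-$\ell$ torsion); and the elementary fact that finite simple groups of Lie type in characteristic $\ell$ have order divisible by $\ell$. The feature worth stressing is uniformity: the resulting bound $J_3(n)=J_1(n)$ depends only on $n$, not on $G$ or on the auxiliary prime $\ell$, precisely because the constant $J_1(n)$ of Theorem \ref{lp} does.
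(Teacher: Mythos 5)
Your argument is correct, but it is worth noting that the paper does not prove this statement at all: Theorem \ref{Jordan} is the classical Jordan theorem, quoted from Larsen--Pink (Theorem 0.1 of \cite{lp}) as a black box. What you have done is supply an actual derivation of it from the positive-characteristic structure theorem (Theorem \ref{lp}), which is essentially the route Larsen and Pink themselves indicate for recovering Jordan's theorem from their main result, and it is different from the classical proofs (Jordan, Blichfeldt, or the Minkowski-style arguments). Each step checks out: Brauer's splitting-field theorem puts $G$ inside $GL_n(\Q(\zeta_{|G|}))$ (faithfulness is preserved since the kernel is read off from the character); localizing the $G$-stable lattice at a prime $\lambda$ above $\ell\nmid |G|$ gives $G\subset GL_n(\O_{F,\lambda})$, and the reduction is injective because any element of the congruence kernel of finite order prime to $\ell$ has all eigenvalues equal to $1$ (prime-to-$\ell$ roots of unity inject under reduction) and is semisimple, hence trivial; then in Theorem \ref{lp} the $\ell$-group $G_3$ and the Lie-type quotient $G_1/G_2$ (whose nontrivial factors have order divisible by the defining characteristic $\ell$) are both killed by $\ell\nmid|G|$, leaving $G_1=G_2/G_3$ abelian of index at most $J_1(n)$. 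The payoff of your approach, beyond self-containedness relative to Theorem \ref{lp}, is the explicit and uniform bound $J_3(n)=J_1(n)$; the cost is that it leans on the full strength of Larsen--Pink, whereas the paper only needs the statement as a quoted classical fact (and indeed uses it later, in Proposition \ref{irr1}, for prime-to-$p$ subgroups lifted to characteristic zero, so no circularity is introduced by your derivation).
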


Henceforth we fix $n\ge 1$ and a positive number $C(n)$ so that 
$$C(n)>\max\{n+3, J_1(n),J_2(n),J_3(n)\}.$$
We will implicitly use the assumption $C(n)>n+3$ in the proof of 
Proposition \ref{structure} later to apply a result of \cite{gura}. (In Theorem B of \cite{gura}, one needs $p\geq n+3$, not $n-3$. It was pointed out by F. Herzig.)

The following lemma is easy to prove:
\begin{lem}\label{easy} Let $G$ be a subgroup of $GL_n(\F_q)$, and $G'$ a subgroup of $G$. Let $[G:G']<d$. 
Then if $G$ satisfies $C(\eta,N)$ for $(0<\eta<1/d)$, then $G'$ satisfies $C(d\eta,N)$.
\end{lem}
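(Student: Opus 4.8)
The plan is to exhibit directly a subset $H'$ of $G'$ witnessing the property $C(d\eta, N)$, starting from a subset $H$ of $G$ witnessing $C(\eta, N)$. So let $H \subset G$ satisfy $(i)$ $(1-\eta)|G| \le |H|$ and $(ii)$ $|\{\det(1-hT) \mid h \in H\}| \le N$. The natural candidate is $H' := H \cap G'$: condition $(ii)$ is then immediate, since the characteristic-polynomial set for $H'$ is a subset of that for $H$. The only real work is the counting estimate $(i)$, i.e. showing $(1 - d\eta)|G'| \le |H \cap G'|$.

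First I would bound $|G' \setminus H|$. Since $G \setminus H$ has size at most $\eta |G| < \frac{1}{d}|G|$, and using $[G:G'] < d$, i.e. $|G| < d\,|G'|$, we get $|G' \setminus H| \le |G \setminus H| \le \eta |G| < \eta d\, |G'| = d\eta\, |G'|$. Hence $|H \cap G'| = |G'| - |G' \setminus H| > |G'| - d\eta\,|G'| = (1 - d\eta)|G'|$, which is condition $(i)$ for $H'$ with parameter $d\eta$. (If $d\eta \ge 1$ the statement is vacuous, so we may assume $0 < d\eta < 1$; note $0 < \eta < 1/d$ is exactly what makes $d\eta$ an admissible parameter.) Thus $H' = H \cap G'$ satisfies both clauses of $C(d\eta, N)$ for $G'$, and we are done.

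There is essentially no obstacle here; the lemma is a soft pigeonhole-type estimate, and the only point requiring a moment's care is getting the inequality $[G:G'] < d$ turned into $|G| < d\,|G'|$ and feeding it correctly into the bound on $|G' \setminus H|$ — note in particular that one does not get a bound on the density of $H$ inside $G'$ from the density of $H$ inside $G$ without paying the factor $d$, which is precisely why the parameter degrades from $\eta$ to $d\eta$.
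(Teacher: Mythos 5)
Your proof is correct, and it follows the same soft counting idea as the paper, but it is actually more careful than the paper's own two-line argument. The paper simply takes the witnessing set $H$ for $G$ and computes $|H|\ge(1-\eta)|G|=M(1-\eta)|G'|\ge(1-d\eta)|G'|$ with $M=[G:G']$, never passing to a subset of $G'$; read literally, that exhibits a large subset of $G$, not of $G'$, whereas the definition of $C(d\eta,N)$ for $G'$ demands a subset of $G'$ itself. Your choice $H'=H\cap G'$, together with the complement bound $|G'\setminus H|\le|G\setminus H|\le\eta|G|<d\eta\,|G'|$, supplies exactly this missing step (condition (ii) being inherited trivially), and it is also what the later applications require: in Proposition \ref{irr1} and Theorem \ref{main1} the witnessing set must lie inside the smaller group ($A$ consisting of diagonal matrices, resp.\ $G_1$) for the bounds $|H|\le n!\,N$ and $|H|\le N M_{G_1}$ to be valid. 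So your version is the one that should be regarded as the complete proof; the only cosmetic remark is that the strict inequality you obtain, $|H\cap G'|>(1-d\eta)|G'|$, is more than the required $\ge$, and your parenthetical about $d\eta\ge1$ is unnecessary since the hypothesis $0<\eta<1/d$ already rules it out.
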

\begin{proof} Set $M=[G:G']$. Take a subset $H$ from the first property of $C(\eta,N)$ for $G$. Then one can see that 
$$|H|\ge (1-\eta)|G|=(M-M\eta)|G'|\ge (1-d\eta)|G'|$$
giving the claim. 
\end{proof}

\begin{prop}\label{irr1} Let $G$ be a semisimple subgroup of  ${GL}_n(\F_q)$ with the order not 
divisible by $p$. 
If $G$ satisfies  $C(\eta,N)$ for $(0<\eta<\frac{1}{C(n)})$, then $|G|\leq B$, where $B=B(\eta,N,n)$ is a constant 
depending only on $\eta,N$, and $n$.
\end{prop}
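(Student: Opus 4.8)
The plan is to reduce to the case where $G$ is a simple group of Lie type (or close to it) via the Larsen--Pink stratification, and then to show that the property $C(\eta,N)$ is incompatible with having a large such group as a section. First I would apply Theorem \ref{lp} to obtain normal subgroups $G\supset G_1\supset G_2\supset G_3$. Since $|G|$ is prime to $p$ we have $G_3=1$ (this is where Corollary \ref{trivial1} and the running hypothesis $C(n)>J_2(n)$ enter; note $p>J_2(n)$ since $|G|$ is prime to $p$ and so is bounded below in a way we may assume, or rather we only care about the regime $p>C(n)$, the finitely many small $p$ being handled separately since then $|GL_n(\F_q)|$ with bounded $q$ forces $|G|$ bounded). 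Thus $G_1/G_2$ is a product of finitely many (say at most $d_0=d_0(n)$) finite simple groups of Lie type in characteristic $p$, and $G_2$ is abelian of order prime to $p$. By Lemma \ref{easy}, after passing to $G_1$ we may assume $[G:G_1]=1$ at the cost of replacing $\eta$ by $J_1(n)\eta$, which is still $<1$ for suitable bookkeeping; so assume $G=G_1$.

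Next I would bound $G_2$. Since $G_2$ is abelian of order prime to $p$, its action on $V\otimes\bFp$ is diagonalizable, and because $G$ is semisimple $G_2$ acts semisimply; an abelian subgroup of $GL_n$ consisting of semisimple elements that is ``large'' would force many distinct characteristic polynomials on a large subset, contradicting $C(\eta,N)$ unless $|G_2|$ is bounded. More precisely, I would argue: if $|G_2|>B_2(\eta,N,n)$, then among the $(1-\eta)|G_2|$-element subset $H\cap G_2$ (using Lemma \ref{easy} again to descend $C(\eta,N)$ to $G_2$) one finds more than $N$ distinct values of $\det(1-hT)$, because an abelian semisimple group of large order inside $GL_n(\bFp)$ has more than $N$ distinct ``eigenvalue multisets'' — the eigenvalues are roots of unity of order dividing $|G_2|$ and bounded rank considerations (at most $n$ simultaneous eigencharacters) give a lower bound on the number of distinct spectra that grows with $|G_2|$. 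Hence $|G_2|\le B_2$.

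It remains to bound each simple factor $S_i$ of $G_1/G_2$; this is the main obstacle. Here I expect to need the structure theory to be developed in Proposition \ref{structure} (referenced as forthcoming in the excerpt), together with the hypotheses $C(n)>n-3$ and $C(n)>3$ which exclude the very small rank exceptional and Suzuki--Ree cases. The idea is that a simple group of Lie type $S$ in characteristic $p$, realized (projectively) inside $GL_n(\bFp)$, either has bounded order or, if $|S|$ is large, contains a maximal torus $T_S$ of large order all of whose semisimple elements have distinct characteristic polynomials on the $n$-dimensional module — again contradicting $C(\eta,N)$, since $C(\eta,N)$ would have to be inherited (via Lemma \ref{easy} and the at-most-$d_0$-factors bound) by a set of proportion $\ge 1-d_0 J_1(n)\eta$ of $S$, which meets $T_S$ in a set of size $\ge (1-\text{const}\cdot\eta)|T_S|$, forcing $>N$ polynomials once $|T_S|$, hence $|S|$, is large. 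The delicate point is that $S$ need not act on an $n$-dimensional space directly (it is a subquotient), so one must combine the lifting of sections to subgroups — using that $G_2$ is now bounded and $G_3=1$, so $S$ is ``almost'' a subgroup of $G\subset GL_n(\F_q)$ up to a bounded central extension — with the classification of low-dimensional representations of groups of Lie type (the appeal to \cite{gura} via $C(n)>n-3$). Assembling: $|G|\le J_1(n)\cdot |G_2|\cdot \prod_{i=1}^{d_0}|S_i|\le J_1(n)\cdot B_2\cdot B_3^{d_0}=:B(\eta,N,n)$, which is the claim.
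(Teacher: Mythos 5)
The hypothesis $p\nmid|G|$ is exactly what your argument fails to exploit, and this is where the gap lies. Within your Larsen--Pink route, $p\nmid|G|$ forces $G_3=1$ directly (no appeal to Corollary \ref{trivial1} or to $p>J_2(n)$ is needed, since $G_3$ is a $p$-group inside a group of order prime to $p$) and, more importantly, it forces $G_1=G_2$: every finite simple group of Lie type in characteristic $p$ has order divisible by $p$, so no such factor can occur as a section of $G$. Hence the part you call ``the main obstacle'' --- bounding the simple factors $S_i$ --- is vacuous under the stated hypothesis; yet in your write-up it is precisely this part that is left as a sketch, resting on the not-yet-proved Proposition \ref{structure}, on an unproved claim that a large simple group of Lie type contains a large maximal torus all of whose elements have distinct characteristic polynomials on the given module (this is essentially the content of Proposition \ref{main} and is not obvious), and on an unspecified lifting of sections to subgroups. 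As a standalone proof, the hard step is therefore missing. Two further problems occur inside your own framework: the descent of $C(\eta,N)$ from $G_1$ to $G_2$ via Lemma \ref{easy} requires $[G_1:G_2]$ to be bounded, which you have not established (and which fails if the Lie-type factors you allow are actually present and large); and your disposal of small $p$ is incorrect, since for a fixed small prime $p$ the field size $q=p^k$ is unbounded, so $|GL_n(\F_q)|$ is not bounded --- though no lower bound on $p$ is needed at all once $p\nmid|G|$ is used as above.

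For comparison, the paper's proof avoids Larsen--Pink entirely here: since $p\nmid|G|$, Schur--Zassenhaus lifts $G$ through the reduction $GL_n(W(\F_q))\to GL_n(\F_q)$ (whose kernel is pro-$p$) to a finite subgroup of $GL_n(\C)$; Jordan's theorem (Theorem \ref{Jordan}) then provides an abelian normal subgroup $A$ with $[G:A]\le J_3(n)$; Lemma \ref{easy} legitimately transfers the condition to $C(C(n)\eta,N)$ for $A$ because the index is bounded; and after diagonalizing $A$, at most $n!$ of its elements share a given characteristic polynomial, so $(1-C(n)\eta)|A|\le n!\,N$, which bounds $|A|$ and hence $|G|\le J_3(n)\,|A|$. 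Your counting for the abelian part is the same in spirit as this last step; what is missing is the reduction showing that, under $p\nmid|G|$, the abelian part is the whole story.
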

\begin{proof}
By assumption, we may assume that $G$ is a subgroup of $GL_n(\C)$.  
For example, we apply Schur-Zassenhaus' theorem (cf. \cite{DM}, page 829) to the natural projection 
$GL_n(W(\F_q))\lra GL_n(\F_q)$ where $W(\F_q)$ the ring of Witt vectors and then get a lift $G$ to $GL_n(W(\F_q))$. 
Then we have only to compose this with an embedding $GL_n(W(\F_q))\hookrightarrow GL_n(\C)$. 

By Theorem \ref{Jordan} there exists an abelian normal 
subgroup $A$ of $G$ such that $[G:A]\leq J_3(n)$. By Lemma \ref{easy}, 
$A$ satisfies $C(C(n)\eta,N)$. We may assume that $A$ is a subgroup consisting of diagonal matrices in $GL_n(\F_q)$.  Then one has 
$$(1-C(n)\eta)|A|\le |H| \le n!N$$
giving a bound of $|A|$. Since $[G:A]$ is bounded, so is $|G|=[G:A]\times|A|$.  
\end{proof}

\begin{prop}\label{irr2} Let $G$ be an irreducible subgroup of  ${GL}_n(\F_q)$. 
Then the following properties hold:
\begin{enumerate}
\item there exists a finite extension field $\F_{q^r}$ and an absolutely irreducible subgroup 
$G'\subset GL_m(\F_{q^r})$ with $n=rm$ such that $G$ is isomorphic to $G'$. Furthermore, for any $g\in G$ and the corresponding $g'\in G'$ under this isomorphism, 
$$f_g(T)=\ds\prod_{\sigma\in {\rm Gal}(\F_{q^r}/\F_q)}f_{g'}(T)^\sigma,
$$ 
where $f_g(T),f_{g'}(T)$ stand for 
characteristic polynomials of $g, g'$, resp. 
\item the center of $G'$ is a cyclic subgroup of $\F^\times_{q^r}I_m\subset  GL_m(\F_{q^r})$.
\end{enumerate}
\end{prop}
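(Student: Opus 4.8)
The plan is to recover the absolutely irreducible representation hidden inside an irreducible one via its endomorphism division algebra. Write $V=\F_q^{\oplus n}$ with its faithful irreducible $G$-action and set $D={\rm End}_{\F_q[G]}(V)$. By Schur's lemma $D$ is a finite-dimensional division algebra over $\F_q$, hence by Wedderburn's little theorem a field; fix an isomorphism $D\cong\F_{q^r}$. This makes $V$ into an $\F_{q^r}$-vector space of some dimension $m$, and since each $d\in D$ is $G$-equivariant, the $G$-action on $V$ is $\F_{q^r}$-linear. We thus obtain a homomorphism $\rho'\colon G\lra {\rm Aut}_{\F_{q^r}}(V)\cong GL_m(\F_{q^r})$, injective because $G$ already acts faithfully on the underlying $\F_q$-space; put $G'=\rho'(G)$, so $G\cong G'$. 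Counting $\F_q$-dimensions gives $n=\dim_{\F_q}V=[\F_{q^r}:\F_q]\cdot\dim_{\F_{q^r}}V=rm$.

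Next I would check that $G'$ is absolutely irreducible over $\F_{q^r}$. Any $\F_{q^r}$-linear, $G$-equivariant endomorphism of $V$ is in particular $\F_q$-linear and $G$-equivariant, so ${\rm End}_{\F_{q^r}[G]}(V)\subseteq D=\F_{q^r}$; the reverse inclusion being clear, ${\rm End}_{\F_{q^r}[G]}(V)=\F_{q^r}$. As $V$ remains irreducible over $\F_{q^r}[G]$ (an $\F_{q^r}[G]$-submodule is an $\F_q[G]$-submodule), Burnside's theorem shows that $\F_{q^r}[G]\to{\rm End}_{\F_{q^r}}(V)\cong M_m(\F_{q^r})$ is surjective; surjectivity survives $\otimes_{\F_{q^r}}\overline{\F}_q$, so $V\otimes_{\F_{q^r}}\overline{\F}_q$ is a faithful, hence simple, $M_m(\overline{\F}_q)$-module, which is exactly absolute irreducibility of $G'$. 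For the characteristic polynomial identity I would trace the field structures through base change: since $\F_{q^r}/\F_q$ is separable Galois, $\F_{q^r}\otimes_{\F_q}\overline{\F}_q\cong\prod_{\sigma\in{\rm Gal}(\F_{q^r}/\F_q)}\overline{\F}_q$, hence $V\otimes_{\F_q}\overline{\F}_q\cong\bigoplus_{\sigma}(V\otimes_{\F_{q^r},\sigma}\overline{\F}_q)$ as $\overline{\F}_q[G]$-modules, with $g$ acting on the $\sigma$-summand by $\rho'(g)$ twisted by $\sigma$, thus with characteristic polynomial $f_{g'}(T)^\sigma$. Since a characteristic polynomial is unchanged by extension of scalars, this yields $f_g(T)=\ds\prod_{\sigma\in{\rm Gal}(\F_{q^r}/\F_q)}f_{g'}(T)^\sigma$, the right-hand side lying in $\F_q[T]$ by Galois invariance.

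For part (2), let $z$ belong to the center of $G'$. Then $z$ is an $\F_{q^r}$-linear automorphism of $V$ commuting with all of $G'$, i.e.\ a unit of ${\rm End}_{\F_{q^r}[G']}(V)$; but this ring equals $\F_{q^r}$ (the computation above applies verbatim with $G$ replaced by its isomorphic image $G'$), so $z\in\F_{q^r}^\times I_m$. Therefore $Z(G')$ is a finite subgroup of $\F_{q^r}^\times$, and every finite subgroup of the multiplicative group of a field is cyclic.

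The content here is entirely classical and no single step is genuinely hard; the part demanding the most care is the bookkeeping in (1) — matching the tautological factor of $\F_{q^r}\otimes_{\F_q}\overline{\F}_q$, on which $D$ acts through the chosen identification, with the representation $\rho'$, and the remaining factors with its Galois conjugates, so that the product formula for $f_g(T)$ emerges with precisely the twists $f_{g'}(T)^\sigma$ and no spurious multiplicities. It is also worth recording that the identity ${\rm End}_{\F_{q^r}[G']}(V)=\F_{q^r}$ used in (2) is already in hand from the construction of $D$ in (1), so no circularity is involved.
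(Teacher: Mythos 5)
Your proof is correct and follows essentially the same route as the paper: Schur's lemma plus Wedderburn identify the centralizer as $\F_{q^r}$, $V$ is then viewed as an $\F_{q^r}[G]$-module to produce the absolutely irreducible $G'\subset GL_m(\F_{q^r})$, and the characteristic polynomial identity and the statement on $Z(G')$ come from comparing with the centralizer and from the Galois decomposition of scalars. The only differences are expository: you verify absolute irreducibility via Burnside's density theorem and obtain the product formula from the splitting $V\otimes_{\F_q}\overline{\F}_q\cong\bigoplus_{\sigma}V\otimes_{\F_{q^r},\sigma}\overline{\F}_q$, where the paper cites the centralizer equality $\F_{q^r}=Z_{M_m(\F_{q^r})}(G')$ and does the same computation in an explicit $\F_q$-basis $\{\alpha^{p^i}e_j\}$.
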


\begin{proof} 
By Schur's lemma, the centralizer $Z=Z_{M_n(\F_q)}(G)$ is a finite division ring. 
By Wedderburn's theorem, $Z$ is a finite field over $\F_q$, say $\F_{q^r}$, since $Z$ contains $\F_q I_n$. 
Since $Z^\times$ acts on $V:=\F^{\oplus n}_q$ faithfully, dim$_{\F_q}\, \F_{q^r}=r$ has to divide $n$. Put $m=\frac{n}{r}$. 
We view $V$ as a $\F_{q^r}$-module. Then one has a faithful representation $G\lra GL(V)\simeq GL_m(\F_{q^r})$. 
To be more precise, if we take a basis $\{e_1,\ldots,e_m\}$ of $V$ as a $\F_{q^r}$-module and a generator $\alpha\in 
\F_{q^r}$ over $\F_q$, then 
a basis of $V$ is given by $\{\alpha^{p^i}e_j\ |\ 0\le i\le r-1,\ 1\le j \le m  \}$. 
Denote by $G'$ the image of  $G$ under this representation. Then $G'$ is absolutely irreducible since 
$\F_{q^r}=Z=Z_{M_n(\F_{q})}(G)=Z_{M_m(\F_{q^r})}(G')$.  
The last claim follows from the direct calculation in this explicit basis.

For the claim (2), let $g$ be an element in the center $Z(G')$. Since $g$ commutes with the action of $G'$, 
it belongs to $Z_{M_m(\F_{q^r})}(G')^\times=\F^\times_{q^r}$. 
\end{proof}

Let $T$ be the group of all diagonal matrices in ${\rm GL}_n(\F_q)$. 
Let $G$ be a semisimple subgroup of  ${\rm GL}_n(\F_q)$. Assume $p>C(n)$. 
Fix a series of normal subgroups $G\supset G_1\supset G_2 \supset G_3$ in Theorem \ref{lp} for $G$. By Corollary \ref{trivial1}, $G_3=1$.
Assume $G_1\neq G_2$ until the end of the proof of the following proposition. 

By Clifford's theorem, $V=\F^{\oplus n}_q$ is a semisimple $G_1$-module. Therefore 
we have a decomposition $V=\bigoplus_{1\le i \le m} W_i$ into irreducible components as a $G_1$-module, where dim$_{\F_q}\, W_i=n_i r_i$ for each $i$.
By Proposition \ref{irr2}, we may assume that each $(G_1,W_i)$ is an absolutely irreducible module 
over the field extension $\F_{q^{r_i}}$ of $\F_q$, and we have a faithful representation 
$\pi_i: G_1\lra GL(W_i)\simeq GL_{n_i}(\F_{q^{r_i}})$. 
We denote by $G^{(i)}$ the image of $G_1$ under $\pi_i$. Then we get an injection
$$G_1\hookrightarrow \prod_{i=1}^m G^{(i)}$$
which is not necessarily surjective, but we will see this map will be an isomorphism under the natural 
quotients.  
Note that clearly $r_i <n$. 

\begin{prop}\label{structure} Under the above setting, the following properties hold:   

\begin{enumerate}
\item for each $i=1,...,m$, the center $Z(G^{(i)})$ is a subgroup of $\F^\times_{q^{r_i}} {\rm id}_{W_i}$, and 
$G_2$ is a subgroup of $T$ so that $\pi_i(G_2)\subset Z(G^{(i)})$. Further 
$G_2\hookrightarrow\ds\prod_{1\le i\le m}\pi_i(G_2)$. 
\item For each $i=1,...,m$, there exists a simple and simply connected linear algebraic group $\mathcal{G}_i$ over $\F_{q^{r_i}}$ realized inside $GL(W_i)$ such that 
$G^{(i)}=Z(G^{(i)})\mathcal{G}_i(\F_{q^{r_i}})$ and $Z(\mathcal{G}_i(\F_{q^{r_i}})) \subset Z(G^{(i)})$. 
In particular, the natural map 
$$G_1/G_2\lra \ds\prod_{1\le i\le m}G^{(i)}/Z(G^{(i)})\simeq \ds\prod_{1\le i\le m}
\mathcal{G}_i(\F_{q^{r_i}})/Z(\mathcal{G}_i(\F_{q^{r_i}}))$$
is isomorphic 
where each component of the right hand side is a simple Chevalley group (cf. \cite{mt}). 
Further  $G_1\hookrightarrow \ds\prod_{1\le i\le m}Z(G^{(i)})\mathcal{G}_i(\F_{q^{r_i}})$ 
with respect to the decomposition $V=\bigoplus_{1\le i \le m} W_i$, and there exist a constant $C_1(n)$ depending only on $n$ so that 
$$|G_1|\ge C_1(n)\ds\prod_{1\le i\le m}|\mathcal{G}_i(\F_{q^{r_i}})|.$$ 
\end{enumerate}
\end{prop}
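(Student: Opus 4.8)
The plan is to run Clifford theory along the chain $G\supset G_1\supset G_2$, feed in the Larsen--Pink description of $G_1/G_2$ (Theorem \ref{lp}), and then use the representation theory of finite groups of Lie type in the \emph{defining} characteristic together with Guralnick's dimension bounds. Throughout one uses that $p>C(n)$ is large: since $p>3$ the Suzuki--Ree groups do not occur, so every simple composition factor of $G_1/G_2$ is a Chevalley group over some $\F_{q'}$ with $q'=p^a\ge p$, and since $C(n)>J_3(n)\ge n$ we have $q'\ge p>n$, so each such factor has no proper subgroup of index $\le n$ (its minimal permutation degree is at least $q'$); the same then holds for any direct product of such factors.

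For (1), fix $i$. As $|G_2|$ is prime to $p$, $\pi_i(G_2)$ diagonalizes over $\overline{\F}_{q^{r_i}}$, and $W_i\otimes\overline{\F}_{q^{r_i}}$ is the direct sum of its at most $n_i\le n$ eigencharacter spaces. Since $G_2\trianglelefteq G_1$, the group $G_1$ permutes this character set, transitively by absolute irreducibility of $W_i$; the stabilizer of a character contains $G_2$ (conjugation of $\pi_i(G_2)$ by $G_2$ is trivial), so its index is computed in $G_1/G_2$, which, being nontrivial as $G_1\ne G_2$ and a direct product of Chevalley groups over fields with $>n$ elements, has no proper subgroup of index $\le n$. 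Hence there is a single character: $\pi_i(G_2)$ acts by $\F_{q^{r_i}}$-scalars on $W_i$, so $\pi_i(G_2)\subset\F_{q^{r_i}}^\times{\rm id}_{W_i}\subset Z(G^{(i)})$, the last inclusion by Schur's lemma (Proposition \ref{irr2}(2)). The injection $G_2\hookrightarrow\prod_i\pi_i(G_2)$ is the restriction of the faithful map $G_1\hookrightarrow\prod_i G^{(i)}$ attached to $V=\bigoplus_i W_i$. Finally each $g\in G_2$ acts on every $W_i$ as a scalar of order prime to $p$, hence is semisimple; as $G_2$ is abelian its elements are simultaneously diagonalizable, so $G_2$ lies, up to conjugacy, in $T$.

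For (2), fix $i$ and write $H=G^{(i)}$, $Z=Z(H)$. By (1), $\pi_i(G_2)\subset Z$, so $H/Z$ is a quotient of $G_1/G_2$, and a quotient of a direct product of nonabelian simple groups is a subproduct of them; thus $H/Z$ is a direct product of Chevalley groups over characteristic-$p$ fields. The crux is that this product has a single factor. Here I would use that $W_i$ is absolutely irreducible over the defining field $\F_{q^{r_i}}$: by the Steinberg tensor product theorem an irreducible module for a product $A\times B$ of such groups in defining characteristic is an outer tensor product, so a splitting $H/Z\cong A\times B$ with $A,B$ nontrivial forces $W_i\cong W'\otimes W''$ with $\dim W',\dim W''\ge 2$, which Guralnick's lower bounds on the dimensions of faithful projective defining-characteristic representations of simple groups of Lie type (this is where $C(n)>n-3$ enters, to invoke \cite{gura}) exclude for $\dim W_i=n_i\le n$. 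With a single factor, taking the Zariski closure of the preimage of that factor inside $GL(W_i\otimes\overline{\F}_{q^{r_i}})$ produces a connected semisimple group of rank $\le n$, which the structure theory of \cite{mt} realizes over $\F_{q^{r_i}}$ as a simple, simply connected group $\mathcal{G}_i$ with $H=Z\,\mathcal{G}_i(\F_{q^{r_i}})$; central elements of $\mathcal{G}_i(\F_{q^{r_i}})$ act as scalars on the irreducible $W_i$, giving $Z(\mathcal{G}_i(\F_{q^{r_i}}))\subset Z$ and hence $H/Z\cong\mathcal{G}_i(\F_{q^{r_i}})/Z(\mathcal{G}_i(\F_{q^{r_i}}))$, a simple Chevalley group.

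To globalize, note that the kernel of $G_1\to\prod_i G^{(i)}/Z(G^{(i)})$ equals $\{g\in G_1: g|_{W_i}\text{ is scalar for every }i\}$, an abelian normal subgroup of $G_1$; since an abelian normal subgroup of a direct product of nonabelian simple groups is trivial, this kernel is $G_2$, so $G_1/G_2$ injects into $\prod_i G^{(i)}/Z(G^{(i)})$, and it is an isomorphism once the $W_i$ are taken pairwise non-isomorphic (so each simple factor of $G_1/G_2$ is seen in exactly one $W_i$), by a Goursat argument. Then $|G_1|\ge|G_1/G_2|=\prod_i|\mathcal{G}_i(\F_{q^{r_i}})/Z(\mathcal{G}_i(\F_{q^{r_i}}))|\ge C_1(n)\prod_i|\mathcal{G}_i(\F_{q^{r_i}})|$ with $C_1(n)=\big(\prod_i|Z(\mathcal{G}_i)|\big)^{-1}$, which depends only on $n$ because the number $m$ of factors (bounded in Theorem \ref{lp}) and the order of the center of a rank-$\le n$ simple simply connected group are bounded in $n$. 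I expect the main obstacle to be the middle of (2): forcing $H/Z$ to be a single Chevalley group and identifying $\mathcal{G}_i$, i.e. ruling out nontrivial tensor and Frobenius-twist decompositions of $W_i$ in defining characteristic — precisely the point where one must go beyond the black box of \cite{lp} and use the representation-theoretic input of the appendix to \cite{ghtt} together with \cite{gura}.
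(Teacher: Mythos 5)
Your treatment of part (1) is fine and is essentially the paper's own Clifford-theoretic argument, with a useful extra detail (the index of the stabilizer of an eigencharacter is computed in $G_1/G_2$, which has no proper subgroup of index $\le n$ because $p>n$) that pins down a single character. The genuine problems are in part (2). First, your mechanism for forcing $H/Z$ to be a single Chevalley factor does not work: in the defining characteristic there is no useful lower bound on dimensions of faithful projective representations, so a central product of two quasi-simple groups of Lie type can act absolutely irreducibly in dimension $\le n$ --- already a central product of two copies of $SL_2(\F_p)$ acting on the $4$-dimensional tensor product of two standard modules does so for every large $p$ --- hence ``Guralnick's lower bounds exclude $\dim W',\dim W''\ge 2$'' is not a valid step once $n\ge 4$. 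The paper's route is different: it applies Theorem B of \cite{gura} (step 1 of the proof of Proposition A.7 of \cite{ghtt}) to the subgroup $\Gamma^0_i\subset G^{(i)}$ generated by the elements of $p$-power order, and combines this with the way $G^{(i)}/Z(G^{(i)})$ arises from the simple factors of $G_1/G_2$ in Theorem \ref{lp}, to obtain $G^{(i)}=Z(G^{(i)})\Gamma^0_i$ with $\Gamma^0_i/Z(\Gamma^0_i)$ simple; this is exactly where the conditions $C(n)>n-3$ and $C(n)>3$ enter, not a dimension count. Second, your ``Zariski closure'' step is vacuous: a finite subgroup of $GL(W_i\otimes\overline{\F}_q)$ is already Zariski closed, so its closure is finite, not a connected semisimple group; producing an algebraic group $\mathcal{G}_i$ with $G^{(i)}=Z(G^{(i)})\mathcal{G}_i(\F_{q^{r_i}})$ is precisely the recognition content the paper imports from \cite{gura}, \cite{ghtt} and \cite{mt}, and your sketch does not supply it.

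The second gap is the surjectivity of $G_1/G_2\lra \prod_i G^{(i)}/Z(G^{(i)})$. Your identification of the kernel with $G_2$ (abelian normal image in a product of nonabelian simple groups) is correct and gives injectivity, but the surjectivity argument fails: pairwise non-isomorphism of the $W_i$ does not prevent one and the same simple factor of $G_1/G_2$ from acting on two different components through two inequivalent irreducible representations, and in that case the image in $S\times S$ is the graph of an isomorphism --- exactly one of the subgroups Goursat's lemma allows --- not the full product (take $G_1=SL_2(\F_p)$ acting on the standard module plus the adjoint module). Since your final inequality $|G_1|\ge C_1(n)\prod_i|\mathcal{G}_i(\F_{q^{r_i}})|$ is deduced from $|G_1|\ge|G_1/G_2|=\prod_i|\mathcal{G}_i(\F_{q^{r_i}})/Z(\mathcal{G}_i(\F_{q^{r_i}}))|$, the gap propagates to the quantitative estimate that Theorem \ref{main1} actually uses, so it is not cosmetic. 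Any repair must group the $W_i$ according to which simple factors act on them (or bound $|G_1|$ from below using only the distinct factors that occur), rather than treating the components independently. To be fair, the paper's own proof asserts this isomorphism with essentially no argument, so you were right to sense that something must be proved here; but the Goursat step as written does not close it.
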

\begin{proof}
The first part of (1) follows from Proposition \ref{irr2}-(2). 
Since $(G_1,W_i)$ is (absolutely) irreducible, by Clifford's theorem, $W_i|_{G_2}$ decomposes into 
isotypical representations of $1$-dimensional representations. Hence $\pi_i(G_2)$ are scalar matrices. 
Hence it clearly commutes with $G^{(i)}$. The latter claim is clear from the injectivity of 
$G_1\hookrightarrow \ds\prod_{i=1}^m G^{(i)}$. 
 
We now prove the second claim. 
Let $\Gamma^0_i$ be the group generated by all elements of $p$-th power order in $G^{(i)}$. 
Then by Theorem B of \cite{gura} (see also step 1 in the proof of Proposition A.7 of \cite{ghtt}), 
$\Gamma^0_i/Z(\Gamma^0_i)$ is a simple Chevalley group. 
Since $\Gamma^0_i$ is a normal subgroup of $G^{(i)}$, so is  $Z(G^{(i)})\cdot \Gamma^0_i/Z(G^{(i)})$ 
in $G^{(i)}/Z(G^{(i)})$. However $G^{(i)}/Z(G^{(i)})$ is by construction (see Theorem \ref{lp}-(b)), a 
simple group. Then one has $Z(G^{(i)})\cdot \Gamma^0_i/Z(G^{(i)})=G^{(i)}/Z(G^{(i)})$.
Hence $Z(G^{(i)})\Gamma^0_i=G^{(i)}$. The surjective map 
$\Gamma^0_i\lra G^{(i)}/Z(G^{(i)})$ induces an isomorphism 
$\Gamma^0_i/Z(\Gamma^0_i)\stackrel{\sim}{\lra} G^{(i)}/Z(G^{(i)})$. If an element $g\in Z(\Gamma^0_i)$ does not 
belong to $ Z(G^{(i)})$, then the previous isomorphism never be isomorphic, hence it gives a contradiction. Hence one has $Z(\Gamma^0_i)\subset Z(G^{(i)})\subset \F^\times_{q^{r_i}}{\rm id}_{W_i}$. 
This means that the image of $\Gamma^0_i$ under the projective map ${\rm GL}(W_i)\lra {\rm PGL}(W_i)$ is 
$\Gamma^0_i/Z(\Gamma^0_i)$ and it is a simple Chevalley group. Then the claim follows 
by looking for any simple Chevalley group which appears in this way (see \cite{mt}). Therefore there exists a 
simple and simply connected algebraic group $\mathcal{G}_i$ over $\F_{q^{r_i}}$ such that 
$G^{(i)}=Z(G^{(i)})\Gamma^0_i=Z(G^{(i)})\mathcal{G}_i(\F_{q^{r_i}})$. 

To prove the last claim, let us consider the following commutative diagram 
$$
\begin{CD}
G_1 @>  >> \ds\prod_{1\le i\le m}Z(G^{(i)})\mathcal{G}_i(\F_{q^{r_i}})\\
\pi_1@VVV \pi_2@VVV  \\
G_1/G_2 @>\sim >> \ds\prod_{1\le i\le m}
\mathcal{G}_i(\F_{q^{r_i}})/Z(\mathcal{G}_i(\F_{q^{r_i}})).
\end{CD}
$$
where the top arrow is an injective map which is defined by the decomposition $V=\ds\bigoplus_{1\le i \le m} W_i$. 
Here $\pi_1$ and $\pi_2$ stand for natural projections. 
Then one has 
$$|G_1|\ge  C_1(n)\ds\prod_{1\le i\le m}
|\mathcal{G}_i(\F_{q^{r_i}})|,\ C_1(n):=\frac{1}{\ds\prod_{1\le i\le m}|Z(\mathcal{G}_i(\F_{q^{r_i}}))|}.$$
Since $\mathcal{G}_i$ is simple and simply connected algebraic group, the cardinality of 
$Z(\mathcal{G}_i(\F_{q^{r_i}}))$ is depending only on 
the rank of $\mathcal{G}_i$, hence on $n$. This completes the proof.  
\end{proof}

For any subgroup $D\subset GL_n(\F_q)$ and each $g\in D$, we define by $M_D(g)$ the number of elements of $D$ which have the same characteristic polynomial as $g$ and put $M_D=\ds\max_{g\in D} \{M_D(g)\}$. Note that 
for any subgroup $A$ of the center $\F^\times_q I_n$, $M_{A\cdot D}=M_D$ and $M_{D_1}\le M_{D_2}$ for 
subgroups $D_1\subset D_2\subset GL_n(\F_q)$. This simple observations 
will be used in the proof of Theorem \ref{main1} below.

\begin{lem}\label{simple-lemma} For each $i$ $(1\le i \le m)$, let $D_i$ be a subgroup of $GL_{n_i}(\F_{q^{r_i}})$. 
We identify the product $D:=\ds\prod_{1\le i\le m} D_i$ with the Levi subgroup of the parabolic subgroup $P_{(n_1,\ldots,n_m)}$ in 
$GL_n(\overline{\F}_q)$ with respect to the partition $n=n_1+\cdots+n_m$. 
Then there exists a constant $C_2(n)$ depending only on $n$ so that 
$$M_D \le C_2(n)\ds\prod_{1\le i\le m} M_{D_i}.
$$   
\end{lem}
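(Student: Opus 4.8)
The plan is to exploit that $\overline{\F}_q[T]$ is a unique factorization domain. Since each $h=(h_1,\ldots,h_m)\in D$ is block-diagonal with $i$-th block $h_i\in D_i$, its characteristic polynomial is $f_h(T)=\prod_{i=1}^m f_{h_i}(T)$, a monic polynomial of degree $n=n_1+\cdots+n_m$. Thus $f_h$ records the multiset of eigenvalues of $h$ with multiplicities, but not how that multiset is distributed among the $m$ blocks; the only obstruction to the trivial estimate $M_D\le\prod_{i=1}^m M_{D_i}$ is therefore the number of ways a fixed monic polynomial of degree $n$ can be written as an ordered product of $m$ monic factors, and I will bound this in terms of $n$ alone.

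First I would fix $g=(g_1,\ldots,g_m)\in D$ and put $P(T)=f_g(T)$. Write $P$ as a product of $n$ monic linear factors over $\overline{\F}_q$, listed with multiplicity. Any factorization $P=Q_1\cdots Q_m$ into monic polynomials arises by assigning each of these $n$ linear factors to one of the $m$ positions, so the number of such ordered factorizations is at most $m^n\le n^n$, using $m\le n$ (each $n_i\ge1$). I would set $C_2(n)=n^n$, which depends only on $n$.

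Next I would partition $S:=\{h\in D\ |\ f_h=P\}$ according to the tuple $(f_{h_1},\ldots,f_{h_m})$ of characteristic polynomials of the blocks of $h$. Every tuple occurring in this way is one of the at most $C_2(n)$ ordered factorizations of $P$ just discussed. For a fixed factorization $(Q_1,\ldots,Q_m)$ of $P$, the elements $h\in D=\prod_{i=1}^m D_i$ with $f_{h_i}=Q_i$ for all $i$ are exactly the tuples $(h_1,\ldots,h_m)$ with each $h_i$ ranging over $\{h_i\in D_i\ |\ f_{h_i}=Q_i\}$; this last set is empty unless $Q_i$ is the characteristic polynomial of some $h_i^0\in D_i$, in which case its cardinality is $M_{D_i}(h_i^0)\le M_{D_i}$. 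Hence at most $\prod_{i=1}^m M_{D_i}$ elements of $S$ give rise to any one tuple, and summing over the at most $C_2(n)$ tuples yields $M_D(g)=|S|\le C_2(n)\prod_{i=1}^m M_{D_i}$. Taking the maximum over $g\in D$ gives the lemma.

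The whole argument is elementary; the only point requiring care is the uniformity of the combinatorial factor $m^n$, i.e.\ that it is bounded by something depending on $n$ alone and not on $q$, the field degrees $r_i$, or the particular partition, which is immediate from $m\le n$. I do not anticipate any serious obstacle.
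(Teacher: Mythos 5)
Your proof is correct and follows essentially the same route as the paper: both bound the number of ways the multiset of eigenvalues (equivalently, the monic factorization of the characteristic polynomial over $\overline{\F}_q$) can be distributed among the $m$ blocks by a purely combinatorial constant depending only on $n$, and then multiply by $\prod_{i} M_{D_i}$ for each fixed distribution. The only difference is cosmetic: the paper takes the multinomial coefficient $\frac{n!}{n_1!\cdots n_m!}$ as $C_2(n)$, while you use the looser but equally valid bound $m^n\le n^n$, and you spell out the summation step that the paper leaves implicit.
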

\begin{proof} We will give a very rough estimation for $C_2(n)$. 
For any $g=(g_1,\ldots,g_m)$, $f_g(T)=\ds\prod_{1\le i\le m}f_{g_i}(T)$. We denote the eigenvalues by 
$\alpha^{(1)}_1,\ldots,\alpha^{(1)}_{n_1},
\alpha^{(2)}_1,\ldots,\alpha^{(2)}_{n_2},\ldots, 
\alpha^{(m)}_1,\ldots,\alpha^{(m)}_{n_m}$ which are not necessarily different from each other. 
Then the number of all permutations which preserve the type $(n_1,\ldots,n_m)$ is $\ds\frac{n!}{n_1!\cdots n_m!}$. 
We may take this as $C_2(n)$. 
\end{proof}

\begin{prop}\label{finite-group} [Proposition 3.1 of \cite{lp} or Lemma 3.5 of \cite{N}]
For any connected algebraic group $G$ over $\F_q$, we have
$$(\sqrt{q}-1)^{2 \text{dim}\, G}\leq |G(\F_q)|\leq (\sqrt{q}+1)^{2 \text{dim}\, G}.
$$
For connected linear algebraic groups, one has a stronger estimate 
$$(q-1)^{\text{dim}\, G}\leq |G(\F_q)|\leq (q+1)^{\text{dim}\, G}.
$$
\end{prop}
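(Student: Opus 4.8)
The plan is to reduce, via the structure theory of algebraic groups, to two basic cases---connected linear groups and abelian varieties---and in each of these to read the estimate off an explicit product formula for $|G(\F_q)|$. I would prove the second, sharper estimate first, for connected linear $G$, and then deduce the first estimate for a general connected $G$ by combining it with the abelian variety case and the elementary inequalities $(\sqrt q-1)^2\le q-1$ and $q+1\le(\sqrt q+1)^2$, which hold for every prime power $q$.

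For the connected linear case I would argue as follows. Since $\F_q$ is perfect, the unipotent radical $R_u(G)$ is defined over $\F_q$, it is connected, and $G/R_u(G)$ is connected reductive over $\F_q$. By Lang's theorem, $H^1(\F_q,\mathcal H)$ is trivial for any connected algebraic group $\mathcal H$ over $\F_q$, so for a short exact sequence $1\to N\to G\to Q\to1$ of connected $\F_q$-groups the sequence of $\F_q$-points is exact and $|G(\F_q)|=|N(\F_q)|\cdot|Q(\F_q)|$. Applied to $1\to R_u(G)\to G\to G/R_u(G)\to1$, this reduces us to the connected unipotent case and the connected reductive case. A connected unipotent group over the perfect field $\F_q$ is split, hence has a composition series with successive quotients isomorphic to $\mathbb{G}_a$, each contributing a factor $q$; thus $|R_u(G)(\F_q)|=q^{\dim R_u(G)}$, which lies between $(q-1)^{\dim R_u(G)}$ and $(q+1)^{\dim R_u(G)}$. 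For $R$ connected reductive over $\F_q$, Steinberg's order formula (see \cite{mt}) gives
$$|R(\F_q)|=q^{N}\prod_{i=1}^{r}(q^{d_i}-\varepsilon_i),$$
with $r=\operatorname{rank}R$, $N=|\Phi^+|$, each $\varepsilon_i$ a root of unity, $\sum_i d_i=N+r$, and $\dim R=2N+r$. Each factor obeys $(q-1)^{d_i}\le q^{d_i}-1\le|q^{d_i}-\varepsilon_i|\le q^{d_i}+1\le(q+1)^{d_i}$ (the inner inequalities by the triangle inequality, the outer ones by the binomial expansion with $q\ge2$), and $(q-1)^N\le q^N\le(q+1)^N$; multiplying and using $N+\sum_id_i=\dim R$ gives $(q-1)^{\dim R}\le|R(\F_q)|\le(q+1)^{\dim R}$. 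Feeding the unipotent and reductive estimates back through the multiplicativity above proves the sharper bound for every connected linear $G$.

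For the abelian variety case, let $A/\F_q$ have dimension $g$, and let $F\in\operatorname{End}(A)$ be the $q$-power Frobenius. Then $A(\F_q)=\ker(1-F)$ and $1-F$ is a separable isogeny, so
$$|A(\F_q)|=\deg(1-F)=\prod_{i=1}^{2g}(1-\alpha_i),$$
where $\alpha_1,\dots,\alpha_{2g}$ are the eigenvalues of $F$ on the first cohomology. By the Riemann Hypothesis for abelian varieties over finite fields, $|\alpha_i|=\sqrt q$, hence $\sqrt q-1\le|1-\alpha_i|\le\sqrt q+1$ and $(\sqrt q-1)^{2g}\le|A(\F_q)|\le(\sqrt q+1)^{2g}$, which is the stated estimate since $2g=2\dim A$. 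Finally, for an arbitrary connected algebraic group $G$ over $\F_q$, Chevalley's structure theorem (available because $\F_q$ is perfect) gives a short exact sequence $1\to L\to G\to A\to1$ with $L$ connected linear and $A$ an abelian variety, all over $\F_q$; by Lang's theorem $|G(\F_q)|=|L(\F_q)|\cdot|A(\F_q)|$, and combining the linear bound for $L$, the abelian variety bound for $A$, and $(\sqrt q-1)^2\le q-1\le q+1\le(\sqrt q+1)^2$ yields $(\sqrt q-1)^{2\dim G}\le|G(\F_q)|\le(\sqrt q+1)^{2\dim G}$.

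I do not expect a genuine obstacle: the nontrivial inputs are all classical (Lang's theorem, Chevalley's structure theorem, Steinberg's order formula, and Weil's Riemann Hypothesis for abelian varieties), and what remains is bookkeeping, the reductive case being the only step that needs care. As an alternative for the upper bound, one can argue uniformly from the Grothendieck--Lefschetz trace formula $|G(\F_q)|=\sum_i(-1)^i\operatorname{Tr}(\mathrm{Frob}_q\mid H^i_c(G_{\overline{\F}_q},\Q_\ell))$: Deligne's weight bound forces every eigenvalue on $H^i_c$ to have absolute value at most $q^{i/2}$, and $H^{*}(G_{\overline{\F}_q},\Q_\ell)$, being a finite-dimensional graded-commutative Hopf algebra over $\Q_\ell$, is an exterior algebra on generators of odd degree at most $2\dim G$, so that a short computation with Poincaré duality gives $\sum_i\dim H^i_c(G_{\overline{\F}_q},\Q_\ell)\,q^{i/2}\le(\sqrt q+1)^{2\dim G}$. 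The matching lower bound is much less transparent from this point of view, which is why I would organize the proof around the structure theory.
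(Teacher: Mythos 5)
Your argument is correct, but note that the paper does not prove this proposition at all: it is quoted verbatim from the literature (Proposition 3.1 of Larsen--Pink, Lemma 3.5 of Nori), so there is no internal proof to match against. What you have written is essentially the standard argument behind those references: reduce via Chevalley's structure theorem and Lang's theorem (which kills $H^1$ of connected groups and makes $|G(\F_q)|$ multiplicative in short exact sequences of connected $\F_q$-groups) to the connected linear and abelian variety cases; handle the abelian variety by the Weil bounds $|1-\alpha_i|\in[\sqrt q-1,\sqrt q+1]$; and handle the linear case by splitting off the unipotent radical (order exactly $q^{\dim}$ over the perfect field $\F_q$) and applying Steinberg's order formula $|R^F|=q^{N}\prod_i(q^{d_i}-\varepsilon_i)$ to the reductive quotient, with $\sum_i d_i=N+r$ and $\dim R=2N+r$, so that the factorwise bounds $(q-1)^{d_i}\le|q^{d_i}-\varepsilon_i|\le(q+1)^{d_i}$ multiply out to the sharper linear estimate; the elementary inequalities $(\sqrt q-1)^2\le q-1$ and $q+1\le(\sqrt q+1)^2$ then give the general statement. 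The only step that genuinely needs care is the reductive one, and you have stated the order formula in its correct general (possibly twisted) form, with the $\varepsilon_i$ roots of unity coming from the finite-order part of Frobenius, so the bookkeeping goes through; your Lefschetz/weights remark is a reasonable alternative for the upper bound only, as you say, and is not needed.
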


Let $G$ be a simple and simply connected algebraic group over a finite field $\F_q$. 
We follow \cite{C} for the following proposition.
\begin{prop}\label{main} Let $l=rank(G)$, and let $A$ be a semisimple element in $G$ and $C(A)$ be the centralizer of $A$ in $G(\Bbb F_q)$, and $d=\text{dim}\, C(A)$. Then  
$$\frac {q^d}{(q+1)^d} \frac {|G(\Bbb F_q)|}{q^l}\leq M_G(A) \leq \frac {q^d}{(q-1)^d} \frac {|G(\Bbb F_q)|}{q^l}.
$$
\end{prop}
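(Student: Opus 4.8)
The plan is to evaluate $M_G(A)$, that is, the number of $g\in G(\F_q)$ with $f_g(T)=f_A(T)$, by stratifying this set according to the semisimple part $g_s$ of $g$. Since $f_g(T)=f_{g_s}(T)$ and the Jordan decomposition is defined over $\F_q$ (so $g\in G(\F_q)$ forces $g_s,g_u\in G(\F_q)$), one gets
$$M_G(A)=\sum_{s}\bigl|\{u\in C_G(s)(\F_q): u\ \text{unipotent}\}\bigr|,$$
the sum being over the semisimple $s\in G(\F_q)$ with $f_s(T)=f_A(T)$, and for fixed $s$ the unipotent part $u$ runs exactly over the unipotent elements of $C_G(s)(\F_q)$.

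First I would show that these $s$ form a single $G(\F_q)$-conjugacy class, namely that of $A$. Over $\overline{\F}_q$, the classification of semisimple conjugacy classes of the simple, simply connected group $G$ (see \cite{C}) shows that a semisimple element is determined up to $G(\overline{\F}_q)$-conjugacy by its characteristic polynomial in the given faithful representation; hence all such $s$ lie in the Frobenius-stable $\overline{\F}_q$-class $\mathcal C_A$ of $A$. The $G(\F_q)$-orbits inside $\mathcal C_A\cap G(\F_q)$ are parametrized by $H^1(F,C_G(A))$. Since $G$ is simply connected, Steinberg's theorem (\cite{C}) says $C_G(A)$ is connected; being the centralizer of a semisimple element in a connected reductive group, it is also reductive. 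Connectedness forces $H^1(F,C_G(A))=1$ by Lang's theorem, so $\mathcal C_A\cap G(\F_q)$ is a single class, of cardinality $|G(\F_q)|/|C_G(A)(\F_q)|$.

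Next I would apply Steinberg's count of unipotent elements: a connected reductive group $\mathbf H$ over $\F_q$ contains exactly $q^{\dim\mathbf H-\mathrm{rank}\,\mathbf H}$ unipotent elements (\cite{C}). Each $C_G(s)$ above is $G(\F_q)$-conjugate to $C_G(A)$, hence of dimension $d$ and of rank $l$ (it contains a maximal torus of $G$ through $A$), so every term of the sum equals $q^{d-l}$ and
$$M_G(A)=\frac{|G(\F_q)|}{|C_G(A)(\F_q)|}\,q^{\,d-l}.$$
Since $C_G(A)$ is a connected linear algebraic group of dimension $d$, Proposition \ref{finite-group} gives $(q-1)^d\le |C_G(A)(\F_q)|\le (q+1)^d$; substituting this yields
$$\frac{q^d}{(q+1)^d}\cdot\frac{|G(\F_q)|}{q^l}\ \le\ M_G(A)\ \le\ \frac{q^d}{(q-1)^d}\cdot\frac{|G(\F_q)|}{q^l},$$
which is the assertion.

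I expect the main obstacle to be the first step: that the characteristic polynomial in the given irreducible faithful representation separates semisimple conjugacy classes over $\overline{\F}_q$ (this separation is precisely what forces the upper bound to hold with the stated constant, and without it $M_G(A)$ could be a bounded sum of several class sizes). For the classical groups in their standard representations it is immediate, the semisimple classes being in bijection with eigenvalue multisets; in general one argues through the adjoint quotient $\chi\colon G\to\A^l$, showing that the induced map to characteristic polynomials is finite and generically injective, and handling separately the finitely many coincidences (for instance those coming from outer automorphisms of $G$). The reduction from $\overline{\F}_q$-conjugacy to $\F_q$-conjugacy is where simple-connectedness is essential; with those two points in hand, Steinberg's unipotent formula and the crude estimate of Proposition \ref{finite-group} make the remainder routine.
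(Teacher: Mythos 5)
Your proposal is correct and follows essentially the same route as the paper's proof: Jordan decomposition $g=g_sg_u$, conjugacy of $g_s$ to $A$ over $\F_q$ via simple-connectedness (Steinberg/Lang), Steinberg's count $q^{d-l}$ of unipotent elements in the connected reductive centralizer, and the estimate $(q-1)^d\le |C(A)(\F_q)|\le (q+1)^d$ from Proposition \ref{finite-group}, giving the identity $M_G(A)=q^{d-l}\,|G(\F_q)|/|C(A)(\F_q)|$ exactly as in the text. The one issue you flag --- that the characteristic polynomial in the given faithful representation separates semisimple classes over $\overline{\F}_q$ --- is indeed the only delicate point, and the paper simply asserts it without comment.
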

\begin{proof} Let $\Delta_G(A)$ be the set of $g\in G$ which has the same characteristic polynomial as $A$ so that 
$M_G(A)=|\Delta_G(A)|$. 
Suppose $g\in \Delta_G(A)$. Then $g=g_s g_u$ with $g_s$ semisimple, $g_u$ unipotent. Then
$det(1-T g_s)=det(1-T A)$.

Since $g_s$ and $A$ are conjugate in $G(\overline{\Bbb F}_q)$, they are conjugate in $G(\Bbb F_q)$ \cite{St1}. 
Over $\overline{\Bbb F}_q$, the algebraic group $C(A)$ is 
the centralizer in $G(\overline{\Bbb F}_p)$ of $A$. Since $A$ is semisimple and $G$ is simply connected, $C(A)$ is a connected reductive group \cite{St}. 
Since $C(A)$ contains any maximal torus of $G(\Bbb F_p)$ and $C(A)\subset G(\Bbb F_p)$, $rank(C(A))=l$.
By Steinberg \cite{St}, 
$$\#\{ \text{unipotent elements in $C(A)(\Bbb F_q)$}\}=q^{d-l}.
$$ 
Therefore,

\begin{eqnarray*} M_G(A) &=& \#\{ \text{pairs $(g_s, g_u)$} |\, \text{$g_s$ is $G(\Bbb F_p)$-conjugate to $A$ and $g_u\in (C(g_s))_u(\Bbb F_q)$ } \} \\
&=& q^{d-l} \#\{ \text{$g_s$ which is $G(\Bbb F_q)$-conjugate to $A$} \}= q^{d-l} \frac {\# G(\Bbb F_q)}{\# C(A)(\Bbb F_q)}.
\end{eqnarray*}

Since $C(A)$ is connected, by Proposition \ref{finite-group}, $(q-1)^d\leq \# C(A)(\Bbb F_p)\leq (q+1)^d$. Hence
our assertion follows.
\end{proof}

Here $M_G(A) \leq K\frac {|G(\Bbb F_q)|}{q^l}$ for a constant $K$ depending only on $dim\,G$.

\begin{thm}\label{main1}
Let $G$ be a semisimple subgroup of $GL_n(\F_q)$. Assume that $p>C(n)$. If $G$ satisfies the property $C(\eta,N)$ for $0<\eta<\frac{1}{C(n)}$, 
then either $|G|$ or $q$ is bounded by a constant depending only on $n$. 
Hence there exists a constant $B=B(\eta,N,n)$ depending only on $\eta,N,n$ such that $|G|\leq B$.
\end{thm}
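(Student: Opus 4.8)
The plan is to combine the Larsen--Pink structure theorem (Theorem~\ref{lp}) with the fine structure obtained in Proposition~\ref{structure} and the orbit-counting bounds of Lemma~\ref{simple-lemma} and Proposition~\ref{main}. First I would reduce to the case where $G$ acts irreducibly on $V=\F_q^{\oplus n}$: a general semisimple $G$ embeds into a product of $GL_{n_i}(\F_q)$ attached to a decomposition $V=\bigoplus W_i'$ into $G$-isotypic blocks, and by Lemma~\ref{simple-lemma} the quantity $M_G$ governing characteristic polynomials is controlled (up to a constant $C_2(n)$) by the product of the $M_{G^{(i)}}$; so it suffices to bound each irreducible factor, and if one factor is bounded while another is not, the unbounded one still carries a group satisfying $C(\eta',N')$ for slightly worse but still admissible constants (Lemma~\ref{easy}). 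So assume $G\subset GL_n(\F_q)$ irreducible, and by Proposition~\ref{irr2} absolutely irreducible over some $\F_{q^r}$ after replacing $n$ by $m=n/r$; note $f_g$ determines $f_{g'}$ up to the finitely many Galois twists, so $C(\eta,N)$ for $G$ forces $C(\eta, N')$ for $G'$ with $N'$ depending only on $n,N$.

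Next I would invoke Theorem~\ref{lp} to get $G\supset G_1\supset G_2\supset G_3$ with $[G:G_1]\le J_1(n)<C(n)$, $G_3=1$ by Corollary~\ref{trivial1} (using $p>C(n)\ge J_2(n)$), $G_2/G_3=G_2$ abelian of order prime to $p$, and $G_1/G_2$ a product of boundedly many finite simple groups of Lie type in characteristic $p$. By Lemma~\ref{easy}, $G_1$ satisfies $C(J_1(n)\eta, N)$, still with $J_1(n)\eta< 1$ after absorbing constants into $C(n)$. Now split into two cases. If $G_1=G_2$, then $G_1$ is solvable and abelian up to index $J_3(n)$; then $G$ is (virtually abelian, hence after lifting to characteristic zero via Schur--Zassenhaus as in Proposition~\ref{irr1}) a group to which Proposition~\ref{irr1} applies directly, giving $|G|\le B(\eta,N,n)$ and we are done. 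If $G_1\ne G_2$, apply Proposition~\ref{structure}: $G_1$ sits inside $\prod_{i=1}^m Z(G^{(i)})\mathcal G_i(\F_{q^{r_i}})$ with $\mathcal G_i$ simple simply connected over $\F_{q^{r_i}}$, $r_i<n$, and $|G_1|\ge C_1(n)\prod_i|\mathcal G_i(\F_{q^{r_i}})|$.

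The crux is then the following dichotomy on the size of $q$. Suppose $q$ is \emph{not} bounded by a constant depending only on $n$; I claim $|G|$ is then bounded, which gives the stated alternative. Pick any $i$ with $\mathcal G_i$ nontrivial (if there is none, $G_1=G_2$, the previous case). In the simple Chevalley group $\mathcal G_i(\F_{q^{r_i}})$ choose a regular semisimple element $A_i$ (so its centralizer is a maximal torus, $d_i=\dim C(A_i)=l_i=\mathrm{rank}\,\mathcal G_i$); Proposition~\ref{main} then gives $M_{\mathcal G_i}(A_i)\ge \bigl(\tfrac{q^{r_i}}{q^{r_i}+1}\bigr)^{l_i}\,|\mathcal G_i(\F_{q^{r_i}})|/q^{r_i l_i}$, and by Proposition~\ref{finite-group}, $|\mathcal G_i(\F_{q^{r_i}})|\ge (q^{r_i}-1)^{\dim\mathcal G_i}$, so $M_{\mathcal G_i}(A_i)\gtrsim (q^{r_i}-1)^{\dim\mathcal G_i - 2 l_i}$, which grows without bound in $q$ since $\dim\mathcal G_i>2l_i$ for every simple group (here one uses $C(n)>3$ to exclude the Suzuki--Ree groups, and in fact $\dim\mathcal G - \mathrm{rank}\,\mathcal G$ is the number of positive roots, always $\ge \mathrm{rank}$ with equality only in rank one where $\dim=3$, $2l=2$, still $>$; so the exponent is positive). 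Lifting this back: taking the element of $D=\prod_j\mathcal G_j(\F_{q^{r_j}})$ that is $A_i$ in the $i$-th slot and $1$ elsewhere, Lemma~\ref{simple-lemma} and the observations on $M_{A\cdot D}=M_D$ for central $A$ show $M_{G_1}\ge M_{D}/C_2(n)\ge M_{\mathcal G_i}(A_i)/C_2(n)$, while $M_{G}\ge M_{G_1}$ as well. But $C(\eta,N)$ for $G_1$ forces $M_{G_1}$ to be comparable to $|G_1|/N$ times a constant depending on $n$ — more precisely, the subset $H$ in $C(\eta,N)$ has $|H|\ge(1-\eta)|G_1|$ and meets at most $N$ characteristic-polynomial classes, so some class contains $\ge (1-\eta)|G_1|/N$ elements, i.e.\ $M_{G_1}\ge (1-\eta)|G_1|/N$. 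Combined with $M_{G_1}\le M_D \le C_2(n)\prod_j M_{\mathcal G_j}$ and the complementary upper bounds from Propositions~\ref{main} and~\ref{finite-group} ($M_{\mathcal G_j}(A)\le K_j |\mathcal G_j(\F_{q^{r_j}})|/q^{r_j l_j}$), one gets an inequality of the shape $(1-\eta)|G_1|/N\le C_2(n)\prod_j K_j\,|\mathcal G_j(\F_{q^{r_j}})|/q^{r_j l_j}$, whereas $|G_1|\ge C_1(n)\prod_j|\mathcal G_j(\F_{q^{r_j}})|$; these together bound $\prod_j q^{r_j l_j}$, hence (since each $l_j\ge 1$ when $\mathcal G_j\ne 1$, and $\mathcal G_j=1$ contributes nothing) bound $q$ — contradicting our assumption that $q$ is unbounded unless in fact \emph{all} $\mathcal G_j$ are trivial, i.e.\ $G_1=G_2$. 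In that remaining subcase $G$ is virtually abelian and Proposition~\ref{irr1} bounds $|G|$. Thus either $q$ is bounded in terms of $n$, or every $\mathcal G_j$ is trivial and $|G|$ is bounded; in the former case, once $q$ is bounded there are only finitely many groups $GL_n(\F_q)$ to consider and $C(\eta,N)$ together with the fixed ambient group bounds $|G|$ as well, so in all cases $|G|\le B(\eta,N,n)$.

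\textbf{Main obstacle.} The delicate point is the bookkeeping of constants through the reductions (passing from $G$ to $G_1$, to the $G^{(i)}$, and to the $\mathcal G_i$), making sure that the perturbed parameters $\eta'$ stay below $1/C(n)$ and that each invocation of Lemma~\ref{easy}, Lemma~\ref{simple-lemma}, Proposition~\ref{structure} and Proposition~\ref{main} only costs a constant depending on $n$; and, at the heart, verifying uniformly over all simple Chevalley groups of rank $<n$ that $\dim\mathcal G - 2\,\mathrm{rank}\,\mathcal G>0$ (equivalently $|\Phi^+|>\mathrm{rank}$, true for every irreducible root system, the tightest case being $A_1$ where one instead uses $\dim=3>2=2\,\mathrm{rank}$), so that the lower bound for $M_{\mathcal G_i}(A_i)$ genuinely forces $q$ to be bounded once $|G|$ is not.
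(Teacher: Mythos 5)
Your proposal follows essentially the same route as the paper: Larsen--Pink (Theorem \ref{lp}) with Corollary \ref{trivial1}, the case split $G_1=G_2$ (handled via Proposition \ref{irr1}) versus $G_1\neq G_2$ (handled via Proposition \ref{structure}), and the decisive chain $(1-\eta')|G_1|/N\le M_{G_1}\le M_D\le C_2(n)\prod_j M_{\mathcal{G}_j}\le K(n)C_2(n)\prod_j|\mathcal{G}_j(\F_{q^{r_j}})|/q^{r_j l_j}$ played against $|G_1|\ge C_1(n)\prod_j|\mathcal{G}_j(\F_{q^{r_j}})|$, which bounds $\prod_j q^{r_j l_j}$ and hence $q$, exactly as in the paper. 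The only caveats are inessential: the preliminary reduction to irreducible $G$ and the regular-semisimple detour (where the asserted inequality $M_{G_1}\ge M_D/C_2(n)$ goes the wrong way, since $G_1\subset D$ and Lemma \ref{simple-lemma} only give upper bounds on $M_{G_1}$) are superfluous and are not used in your final derivation, which coincides with the paper's argument.
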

\begin{proof} Take a series of normal subgroups $G\supset G_1\supset G_2 \supset G_3$ in Theorem \ref{lp}. 
By Corollary \ref{trivial1}, $G_3=\{1\}$. If $G_1=G_2$, then the claim follows from Proposition \ref{irr1}. 

Henceforth we assume $G_1\not= G_2$. Then by Proposition \ref{structure}, there exists a injective map 
$$G_1\lra \ds\prod_{1\le i\le m}Z(G^{(i)})\mathcal{G}_i(\F_{q^{r_i}}),\quad \mathcal{G}_i(\F_{q^{r_i}})\subset 
GL_{n_i}(\F_{q^{r_i}}), \quad 1\le i \le m
.$$   
Then one has  
$$M_{G_1}\le M_{D},\ D:=\ds\prod_{1\le i\le m}Z(G^{(i)})\mathcal{G}_i(\F_{q^{r_i}}).$$
Since $G$ satisfies $C(\eta,N)$, by Lemma \ref{easy}, $G_1$ satisfies $C( C(n)\eta,N)$. 
This means that 
$$(1-C(n)\eta)|G_1|\leq |H|.
$$
Then applying Proposition \ref{main} to $D=\ds\prod_{1\le i\le m}Z(G^{(i)})\mathcal{G}_i(\F_{q^{r_i}})$, and 
by Lemma \ref{simple-lemma}, one has 

\begin{eqnarray*}
&{}& (1-C(n)\eta)C_1(n)\ds\prod_{1\le i\le m}|\mathcal{G}_i(\F_{q^{r_i}})| \le   (1-C(n)\eta)|G_1| \leq |H|\le NM_{G_1} \\
&{}& \le N M_D \le  N C_2(n)\ds\prod_{1\le i \le m} M_{Z(G^{(i)})\mathcal{G}_i(\F_{q^{r_i}})} 
=N C_2(n)\ds\prod_{1\le i \le m} M_{\mathcal{G}_i(\F_{q^{r_i}})}  \\
&{}&\le  N K(n) C_2(n)\ds\prod_{1\le i \le m}\frac{|\mathcal{G}_i(\F_{q^{r_i}})|}{q^{r_i l_i}}
\end{eqnarray*}
with a constant $K(n)$, where $l_i={\rm rank}\, \mathcal{G}_i$. This gives us the bound 
$$q\le \ds\prod_{1\le i\le m}q^{r_i l_i}\le \frac{N K(n) C_2(n)}{(1-C(n)\eta)C_1(n)}.
$$
Hence the claim follows. 
\end{proof}
\begin{cor}\label{finiteness}
Let $S$ be an infinite set of rational primes. Suppose for each prime $\ell\in S$, the image of 
a mod $\ell$ semisimple Galois representation $\rho_\ell:G_\Q\lra GL_n(\F_\ell)$ satisfies $C(\eta,N)$ for 
$0<\eta<\ds\frac{1}{C(n)}$. Then there exists a constant $A=A(\eta,N,n)$ such that 
$|{\rm Im}\, \rho_\ell|\leq A$.  
\end{cor}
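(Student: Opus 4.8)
The plan is to reduce the statement directly to Theorem \ref{main1} by arranging that the hypothesis $p > C(n)$ holds for all but finitely many of the primes $\ell \in S$, and then extracting a uniform bound over the remaining infinitely many primes. First I would observe that a mod $\ell$ Galois representation $\rho_\ell: G_\Q \lra GL_n(\F_\ell)$ has image a finite subgroup $G_\ell := {\rm Im}\,\rho_\ell \subset GL_n(\F_\ell)$; by hypothesis this $G_\ell$ is semisimple and satisfies $C(\eta,N)$ with the fixed $\eta \in (0, 1/C(n))$ and the fixed $N$. The residual characteristic in Theorem \ref{main1} is $p = \ell$ itself, since the coefficient field is the prime field $\F_\ell$. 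So the only obstruction to applying Theorem \ref{main1} verbatim is the requirement $\ell > C(n)$, which fails for at most the finitely many primes $\ell \le C(n)$.

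Next I would split $S$ into $S_{\rm small} := \{\ell \in S : \ell \le C(n)\}$ and $S_{\rm big} := \{\ell \in S : \ell > C(n)\}$. The set $S_{\rm small}$ is finite, so for each $\ell \in S_{\rm small}$ the image $G_\ell \subset GL_n(\F_\ell)$ is a finite group of some definite order; let $A_0$ be the maximum of $|G_\ell|$ over this finite collection (or $A_0 = 1$ if $S_{\rm small}$ is empty). For each $\ell \in S_{\rm big}$, Theorem \ref{main1} applies to $G_\ell$ with $q = \ell$ and yields that either $|G_\ell|$ is bounded by a constant $B = B(\eta,N,n)$ depending only on $n$ (and the fixed $\eta, N$), or $q = \ell$ is bounded by a constant depending only on $n$. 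But if $\ell$ is bounded by a constant $C'(n)$, then $G_\ell$ lies in $GL_n(\F_\ell)$ with $\ell < C'(n)$, hence $|G_\ell| < |GL_n(\F_{C'(n)!})|$ or more crudely $|G_\ell| \le (C'(n))^{n^2}$, again a constant depending only on $n$. In either case $|G_\ell| \le B'$ for a constant $B' = B'(\eta,N,n)$.

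Setting $A := A(\eta,N,n) = \max\{A_0, B, B'\}$ then gives $|{\rm Im}\,\rho_\ell| = |G_\ell| \le A$ for every $\ell \in S$, which is the assertion. I do not expect any genuine obstacle here: the corollary is essentially a uniformization of Theorem \ref{main1} across a family of residue characteristics, and the only subtlety — the finitely many small primes where the characteristic hypothesis $p > C(n)$ fails — is handled trivially because a finite set of finite groups has a maximal order. If one wants to avoid even mentioning $S_{\rm small}$, one may simply note that $S$ being infinite forces $S_{\rm big}$ to be infinite, apply Theorem \ref{main1} there to get the bound $A$ on that cofinite part, and then enlarge $A$ by the finitely many remaining values; the argument is the same either way.
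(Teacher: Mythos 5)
Your proposal is correct and matches the paper's intent: the paper states this corollary without proof as an immediate consequence of Theorem \ref{main1}, applied with $q=p=\ell$ to the semisimple image $G_\ell$ for each $\ell>C(n)$, exactly as you do. Just make sure the contribution of the finitely many primes $\ell\le C(n)$ is bounded by the uniform quantity $|GL_n(\F_\ell)|\le C(n)^{n^2}$ (as in your crude estimate) rather than by the family-dependent maximum $A_0$, so that $A$ depends only on $(\eta,N,n)$ as claimed.
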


\section{Mod $\ell$ representations}

We state the assumption on the existence of mod $\ell$ representations.
\begin{conjecture}\label{mod-p-galois}
Let $\pi$ be a cuspidal representation of $GL_n(\A_\Q)$ satisfying Assumptions (1) and (2) in the introduction, namely,
 the finiteness of the Hecke field $\Q_\pi$, and the integrability of the Hecke polynomial $H_p(T)$. 
 
Then for all but finitely many $\ell$ coprime to $N$ and each finite place $\lambda$ of $\Q_\pi$ above $\ell$ with the 
residue field $\F_\lambda$, there exists a continuous semi-simple representation 
$$\rho_\lambda:G_\Q\lra GL_n(\bF_\lambda)
$$
which is unramified outside of $\ell N$, so that 
$$\det(I_n-\rho_\lambda({\rm Frob}_p) T)\equiv H_p(T)\, {\rm mod}\ \lambda,
$$
for any $p\nmid \ell N$.
\end{conjecture}

\section{Artin representations associated to cuspidal representations}

In this section we give a proof of the main theorem (Theorem \ref{artin-rep}). 
Let $\pi$ satisfy Assumptions (1)-(5) in the introduction. Let $\Sigma_\pi$ be the set of primes $\ell$ which are 
excluded in the statement of Conjecture \ref{mod-p-galois} 
for the existence of mod $\ell$ representation for $\pi$. 
We denote by $S_\pi$ the union of $\Sigma_\pi$ and the set of rational primes consisting of primes $p$ so that $\pi_p$ is ramified. 
Let $K$ be a Galois closure of $\Q_\pi$. By assumptions on $\pi$, this is a finite extension of $\Q$.    
Let $P_K$ be the set prime numbers $\ell$ which splits completely in $K$. 
For each $\ell\in P_K$, choose a finite place $\lambda_\ell$ of $K$ dividing $\ell$. 
By Conjecture \ref{mod-p-galois}, there exists a continuous semi-simple representation 
$$\rho_\ell:G_\Q\lra GL_n(\overline{\F}_{\ell})$$
which is unramified outside $S_\pi\cup\{\ell\}$, and 
$$\det(I_n-\rho_\ell({\rm Frob}_p)T)\equiv H_p(T) \, {\rm mod}\ \lambda_\ell.
$$
By Lemma 6.13 of \cite{d&s}, we may assume that the image of $\rho_\ell$ takes the values in 
$GL_n(\F_\ell)$.  
Let $G_\ell:={\rm Im}\, \rho_\ell$. 

\begin{lem}\label{condition}For any $\eta,\, 0<\eta<1$, there exists a constant $M$ such that 
$G_\ell$ satisfies $C(\eta,M)$ for every $\ell\in P_K$.
\end{lem}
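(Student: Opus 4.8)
The plan is to produce a uniform bound $M$ on the number of distinct characteristic polynomials attained on a large subset of $G_\ell$, valid simultaneously for all $\ell \in P_K$. First I would recall the source of the arithmetic input: by Proposition \ref{density}, for the given $\eta$ there is a set $X_\eta$ of primes with $\mathrm{den.sup}\, X_\eta \le \eta$ such that the set of Satake data $\{(a_1(p),\ldots,a_n(p)) \mid p \notin X_\eta\}$ is finite; call its cardinality $M = M(\eta)$. The idea is then that for $p \notin X_\eta \cup S_\pi \cup \{\ell\}$, the characteristic polynomial $\det(I_n - \rho_\ell(\mathrm{Frob}_p)T)$ is the reduction mod $\lambda_\ell$ of $H_p(T) = 1 - a_1(p)T + \cdots + (-1)^n a_n(p)T^n$, so it takes at most $M$ values as $p$ ranges over this set. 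Since $\ell$ splits completely in $K$, reduction mod $\lambda_\ell$ identifies $\mathcal{O}_K/\lambda_\ell$ with $\F_\ell$ and these reductions land in $GL_n(\F_\ell)[T]$ as required; crucially $M$ does not depend on $\ell$.

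Next I would pass from Frobenius elements to the whole image. Let $H_\ell \subset G_\ell$ be the set of elements of the form $\rho_\ell(\mathrm{Frob}_p)$ for $p \notin X_\eta \cup S_\pi \cup \{\ell\}$, together with their conjugates — more precisely, let $H_\ell$ be the union of all conjugacy classes of $G_\ell$ that meet $\{\rho_\ell(\mathrm{Frob}_p) : p \notin X_\eta \cup S_\pi \cup\{\ell\}\}$. By construction, the number of distinct characteristic polynomials on $H_\ell$ is still at most $M$ (the characteristic polynomial is a class function). It remains to check that $H_\ell$ is large in $G_\ell$, i.e.\ $|H_\ell| \ge (1-\eta)|G_\ell|$, which by the Chebotarev density theorem reduces to a statement about densities of primes. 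Applying Chebotarev to the finite Galois extension of $\Q$ cut out by $\ker \rho_\ell$: the set of primes $p$ with $\rho_\ell(\mathrm{Frob}_p)$ landing in a given conjugacy class $\mathcal{C}$ has Dirichlet density $|\mathcal{C}|/|G_\ell|$, while the primes in $X_\eta$ contribute upper density at most $\eta$. Hence the conjugacy classes entirely missed by $\{\rho_\ell(\mathrm{Frob}_p) : p \notin X_\eta\}$ have total density at most $\eta$, so $|H_\ell| \ge (1-\eta)|G_\ell|$.

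This establishes property $C(\eta, M)$ for $G_\ell$ with the same $M$ for every $\ell \in P_K$, as desired. The main obstacle — and the point requiring the most care — is the interface between the ``den.sup'' of the exceptional set $X_\eta$ (an analytic upper density defined via $\limsup_{s\to 1^+}$) and the Dirichlet density supplied by Chebotarev: one must ensure that a set of primes of Dirichlet density exactly $\delta$ cannot be swallowed up to within more than $\eta$ by a set of den.sup at most $\eta$, which amounts to the elementary fact that $\mathrm{den.sup}(A \cup B) \le \mathrm{den.sup}(A) + \mathrm{den.sup}(B)$ and that a Dirichlet density dominates the corresponding den.inf. A minor secondary point is checking that $X_\eta$ may be enlarged by the finite set $S_\pi \cup \{\ell\}$ without changing any density, which is immediate. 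Everything else is bookkeeping.
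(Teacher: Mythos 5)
Your proposal is correct and follows essentially the same route as the paper: define $H_\ell$ as the union of conjugacy classes of $G_\ell$ meeting $\{\rho_\ell({\rm Frob}_p): p\notin X_\eta\}$, bound the number of characteristic polynomials on $H_\ell$ by the cardinality $M$ of the finite set of Hecke polynomials $\{H_p(T): p\notin X_\eta\}$ from Proposition \ref{density}, and use Chebotarev together with ${\rm den.sup}(X_\eta)\le\eta$ to get $(1-\eta)|G_\ell|\le|H_\ell|$. Your extra care about comparing the Dirichlet density from Chebotarev with the upper density of $X_\eta$ is a point the paper glosses over, but it is handled correctly and does not change the argument.
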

\begin{proof}
By Proposition \ref{density}, if we let $\mathcal{M}:=\{ H_p(T)\, |\ p\not\in X_\eta\},$
then $\mathcal{M}$ is a finite set. Let $M:=|\mathcal{M}|$ which will be a desired constant as below. 
Let us consider the subset of $G_\ell$ defined by 
$$H_\ell:=\{g\in G_\ell\ |\ g \stackrel{\tiny{G_\ell}}{\sim} \rho_\ell({\rm Frob}_p) {\rm\ for\ some\ }p\not\in X_\eta  \}.$$
By Chebotarev density theorem, one has 
$$1=\frac{|H_\ell|}{|G_\ell|}+{\rm den}(X_\eta)\le \frac{|H_\ell|}{|G_\ell|}+{\rm den.sup}(X_\eta)
\le \frac{|H_\ell|}{|G_\ell|}+\eta,$$
giving $(1-\eta)|G_\ell|\le |H_\ell|$. 

The characteristic polynomial of each element of $H_\ell$ is the reduction of some element of $\mathcal{M}$. Therefore one has 
$$|\{\det(I_n-hT)\ |\ h\in H_\ell  \}|\le M.$$
\end{proof}

By Lemma \ref{condition} together with Corollary \ref{finiteness},  
there exists a constant $A$ such that $|G_\ell|\le A$ for any $\ell\in P_K$. 
Let $Y$ be the set of polynomials $\prod_{i=1}^n (1-\alpha_i T)$, where $\alpha_i$'s are 
roots of unity of order less than $A$. 

If $p\not\in S_\pi$, for all $\ell\in P_K$ with $\ell\neq p$, there exists $R(T)\in Y$ such that 
$$H_p(T)\equiv R(T) \ {\rm mod}\ \lambda_\ell.
$$
Since $Y$ is finite and $P_L$ is infinite, 
$$H_p(T)=R(T).
$$

Let $P_K'$ be the set of $\ell\in P_K$ such that $\ell>A$ and for $R,S\in Y$ with $R\not=S$, $R\not\equiv S\ {\rm mod}\ \lambda_\ell$. 
Then it is easy to see that $P_K'$ is infinite. 
For each $\ell\in P_K'$, $\ell$ does not divide $|G_\ell|$, since $\ell>A\ge |G_\ell|$. Let 
$\pi_\ell:GL_n(\mathcal{O}_{\lambda_\ell})\lra GL_n(\F_\ell)$ be the reduction modulo $\lambda$.
Applying a profinite version of Schur-Zassenhaus' theorem (cf. \cite{RZ}, page 40, Theorem 2.3.15) to $\pi^{-1}(G_\ell)$ 
and $\pi^{-1}(G_\ell)\cap {\rm Ker}(\pi)$ (note that the latter group is a Hall subgroup of $\pi^{-1}(G_\ell)$ 
in the sense of \cite{RZ}),   
there exists a subgroup $H\subset \pi^{-1}(G_\ell)$ such that $\pi^{-1}(G_\ell)=H\cdot(\pi^{-1}(G_\ell)\cap {\rm Ker}(\pi))$ 
and $H\cap(\pi^{-1}(G_\ell)\cap {\rm Ker}(\pi))={1}$. 
Then the composition of the inclusion $H\hookrightarrow \pi^{-1}(G_\ell)$ and 
$\pi$ induces an isomorphism  
$$H \stackrel{\sim}{\lra} G_\ell={\rm Im}\: \rho_\ell.
$$ 
Hence we have a lift $\rho'_\ell:G_\Q\lra GL_n(\mathcal{O}_{\lambda_\ell})$ of $\rho_\ell$.
Since the coefficient of $\rho'_\ell$ is of characteristic zero and its image is finite, for $p\nmid N\ell$, one has $\det(I_n-\rho'_\ell({\rm Frob}_p)T)\in Y$. 
On the other hand, we have 
$$\det(I_n-\rho'_\ell({\rm Frob}_p)T)\equiv H_p(T) {\rm mod}\ \lambda_\ell.
$$
Since $\ell\in P_K'$, the above congruence relation implies the equality 
$$\det(I_n-\rho'_\ell({\rm Frob}_p)T)=H_p(T).
$$  
for all $p\nmid N\ell$. Now we replace $\ell$ with another prime $\ell'\in P_K'$. Then one has 
$\rho'_{\ell'}:G_\Q\lra GL_n(\mathcal{O}_{\lambda_{\ell'}})$ such that 
$$\det(I_n-\rho'_\ell({\rm Frob}_p)T)=\det(I_n-\rho'_{\ell'}({\rm Frob}_p)T)
$$
for all $p\nmid N\ell\ell'$. 
By Chebotarev density theorem, one has $\rho'_{\ell'}\sim \rho'_{\ell}$ and this means that 
$\rho'_{\ell}$ is unramified at $\ell$. Hence we have the desired representation 
\begin{equation*}\label{artin-rep-eq}
\rho_\pi:=\rho'_{\ell}:G_\Q\lra GL_n(\mathcal{O}_{\lambda_\ell})\hookrightarrow GL_n(\C),
\end{equation*}
where the second map comes from a fixed embedding $\mathcal{O}_{\lambda_\ell}\hookrightarrow \C$. 
This representation is independent of any choice of such a embedding by Chebotarev density theorem. 
The infinity type $\pi_\infty$ was determined in Proposition \ref{infinity}.

\begin{cor} Let $\pi$ be a cuspidal representation of $GL_n(\A_\Q)$ which satisfies Assumptions (1)-(5) in the introduction.
Then $\pi_p$ is tempered for all $p$.
\end{cor}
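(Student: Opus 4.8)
The plan is to read the corollary off Theorem \ref{artin-rep}, using the one extra input that an Artin representation has \emph{finite} image; write $\rho_\pi$ for the representation produced there.

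I would first dispose of the unramified places and of infinity. For $p\nmid N$, Theorem \ref{artin-rep} identifies the Satake parameters $\alpha_1(p),\dots,\alpha_n(p)$ of $\pi_p$ with the eigenvalues of $\rho_\pi(\mathrm{Frob}_p)$; since $\mathrm{Im}\,\rho_\pi$ is finite these are roots of unity, so $|\alpha_i(p)|=1$ and $\pi_p$ is tempered. At $\infty$, the Proposition recalled in Section 3 gives $\pi_\infty\simeq\pi(\epsilon'_1,\dots,\epsilon'_n)$, which is unitarily parabolically induced from the unitary characters $1$ and $\mathrm{sgn}$ of $\R^\times$ and hence tempered. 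This settles everything outside the finitely many primes dividing $N$.

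For $p\mid N$ the substance is local--global compatibility. First one checks that $\rho_\pi$ is irreducible: since $\rho_\pi\otimes\rho_\pi^\vee$ contains the trivial representation with multiplicity $\langle\rho_\pi,\rho_\pi\rangle$, this multiplicity equals the order of the pole of $L(s,\rho_\pi\otimes\rho_\pi^\vee)$ at $s=1$, which equals the order of the pole of $L(s,\pi\times\widetilde\pi)$ at $s=1$ (the remaining local factors being finite and nonzero there), namely $1$ by cuspidality of $\pi$. Then the Proposition of \cite{martin} gives equality of \emph{all} local $L$-factors, and of the archimedean factor, of $\pi$ and $\rho_\pi$. Running the same comparison for the twists $\pi\otimes\chi$ by Dirichlet characters $\chi$ — each of which again satisfies the assumptions of Section \ref{assump}, with associated Artin representation $\rho_\pi\otimes\chi$, because $\wedge^m(\pi\otimes\chi)=\wedge^m(\pi)\otimes\chi^m$ stays automorphic, $\chi(p)$ is a root of unity so integrality and finiteness of the Hecke field persist, and Galois conjugation is unaffected — and combining with the matching of the global functional equations, one pins down the Weil--Deligne representation attached to $\pi_p$ as $\rho_\pi|_{G_{\Q_p}}$, with trivial monodromy since $\rho_\pi$ has finite image. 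As $\rho_\pi|_{W_{\Q_p}}$ is then a direct sum of irreducible representations with unitary Frobenius eigenvalues, $\pi_p$ is an isobaric sum of unitary supercuspidal representations, hence tempered.

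The hard part is exactly this local comparison at the ramified primes: there is no formal route from ``$\mathrm{Im}\,\rho_\pi$ is finite'' to ``$\pi_p$ is tempered'', and comparing bare $L$-factors at a single prime does not suffice, since a constituent of $\pi_p$ of the form $\mathrm{St}(\rho_0,a)\,|\det|^{t}$ with $\rho_0$ supercuspidal of dimension $>1$ and $0<|t|<\tfrac12$ contributes trivially to every such factor. One genuinely has to compare \emph{local} Langlands parameters, which is why the argument must use the full strength of the cited compatibility statement — equivalently, Henniart's characterization of the local correspondence through $\varepsilon$-factors of pairs, fed by the global functional equations of a sufficiently rich family of twists — rather than merely the agreement of almost all Euler factors. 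Granting that, the corollary is immediate.
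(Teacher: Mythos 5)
Your skeleton is the same as the paper's: produce $\rho_\pi$ from Theorem \ref{artin-rep} and feed it into the proposition of \cite{martin} quoted in Section 3. The paper's own proof is exactly that and nothing more: the Euler factors agree at almost all $p$, hence by Martin's proposition $L(s,\rho_p)=L(s,\pi_p)$ for all $p$ (and at $\infty$), ``hence $\pi_p$ is tempered for all $p$.'' Two of your additions are genuinely to the point: the unramified/archimedean discussion is correct, and your verification that $\rho_\pi$ is irreducible (comparing the order of the pole at $s=1$ of $L(s,\pi\times\tilde\pi)$ with that of $L(s,\rho_\pi\otimes\rho_\pi^\vee)$) supplies a hypothesis of Martin's proposition that the paper leaves implicit.

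The difficulty is the part you add at $p\mid N$, which is also where the proposal stops being a proof. You are right that equality of bare $L$-factors does not formally exclude a complementary-series constituent $\mathrm{St}(\rho_0,a)\,|\det|^{t}$ with $\rho_0$ supercuspidal of dimension $\ge 2$; but your substitute does not close this. Twisting by Dirichlet characters and matching global functional equations gives at best the $L$- and $\varepsilon$-factors of the abelian twists of $\pi_p$, and for $n\ge 3$ this data does not determine the Weil--Deligne parameter --- this is precisely why Henniart's characterization of the local correspondence uses $\varepsilon$-factors of \emph{pairs} with $GL_r$, $r<n$. To run the pairs version you would need $L(s,\pi\times\tau)=L(s,\rho_\pi\otimes\sigma_\tau)$ for cuspidal $\tau$ on $GL_r$ together with a Galois parameter $\sigma_\tau$ attached to each such $\tau$, input that is not available in this setting. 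So the step ``one pins down the Weil--Deligne representation attached to $\pi_p$ as $\rho_\pi|_{G_{\Q_p}}$'' is unjustified, and the ramified case remains open in your argument. Note that the paper itself passes from $L(s,\rho_p)=L(s,\pi_p)$ at all $p$ directly to temperedness, i.e.\ it treats as immediate exactly the step you flag; your observation identifies a genuine subtlety there, but the repair you propose does not yet fill it.
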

\begin{proof} By Theorem \ref{artin-rep}, there exists the Artin representation $\rho_\pi : G_\Q\lra GL_n(\C)$ such that
for almost all $q$,

$$\det(I_n-\rho_\pi({\rm Frob}_q)T)=H_q(T).
$$ 
This shows that $\pi_q$ is tempered for almost all $q$. 

Suppose $\pi_p$ is non-tempered. We apply Proposition A.1 of \cite{martin} to the Rankin-Selberg $L$-function $L(s,\pi\times\tilde\pi)$:
Since $L(s,\pi_q\times\tilde\pi_q)=L(s,\rho_q\times\tilde\rho_q)$ for almost all $q$, in particular, we have 
$L(s,\pi_p\times \tilde\pi_p)=L(s,\rho_p\times \tilde\rho_p)$. Suppose $\pi_p$ is of the form (\ref{rama}). Then
by (\ref{L-fun}), the left hand side has a factor $L(s-2r_1,\eta_1\times\tilde\eta_1)$ which has a pole at $s=2r_1>0$. On the other hand, the right hand side is holomorphic for $Re(s)>0$. Contradiction. 
Hence $\pi_p$ is tempered for all $p$.
\end{proof}

\section{Non-holomorphic Siegel modular forms and holomorphic Siegel modular forms via the congruence method}

In this section we follow the notation of \cite{kim-yam}. 
Let us first recall the existence of a Galois representation for any holomorphic 
Siegel modular form of weight $(k_1,k_2)$, $k_1\geq k_2\geq 2$, for $GSp_4$. 
Thanks to the works of \cite{laumon}, \cite{wei} and \cite{taylor}
with the classification of CAP forms (\cite{ps}, \cite{soudry}, \cite{schmidt}) and 
endoscopic representations for $GSp_4$ (\cite{roberts}), we can associate a Galois representation to $F$.  

\begin{thm}\label{galois}For any prime $\ell$,  
there exists a number field $E$ including $\Q_F$, such that for each rational prime $\ell$ and a finite place 
$\lambda|\ell$ of $\Q_F$, there exists a continuous representation 
$\rho_{F,\ell}:G_\Q\lra GL_4(E_\lambda)$
which is unramified outside of $\ell N$ so that 
$$\det(I_4-\rho_{F,\ell}({\rm Frob}_p)p^{-s})^{-1}=L_p(s,F)=L_p(s-\tfrac{k_1+k_2-3}{2},\pi_F)$$
for any $p\nmid \ell N$. Furthermore, if $k_1\ge k_2\ge 3$ and $\pi_F$ is neither endoscopic nor CAP, then 
the image of $\rho_{F,\ell}$ can be taken in $GSp_4(E_\lambda)$.  
\end{thm}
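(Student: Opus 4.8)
The plan is to realize the holomorphic Siegel cusp form $F$ of weight $(k_1,k_2)$ inside the cuspidal automorphic representation $\pi_F$ of $GSp_4(\A_\Q)$ it generates, whose archimedean component is the holomorphic (limit of) discrete series attached to $(k_1,k_2)$, and then to construct $\rho_{F,\ell}$ separately according to the position of $\pi_F$ in the discrete spectrum of $GSp_4$. By the classification of CAP representations \cite{ps}, \cite{soudry}, \cite{schmidt} and of endoscopic (Yoshida) representations \cite{roberts} for $GSp_4$, exactly one of the following holds: (i) $\pi_F$ is of \emph{general type}, i.e.\ neither CAP nor endoscopic, hence stable and not a transfer from a proper endoscopic subgroup; (ii) $\pi_F$ is CAP; (iii) $\pi_F$ is a Yoshida lift. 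I would fix a number field $E$, to be enlarged finitely often below, containing $\Q_F$, and a place $\lambda\mid\ell$ of $E$, and build a continuous $\rho_{F,\ell}\colon G_\Q\lra GL_4(E_\lambda)$ in each case.

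In case (i) I would invoke Laumon \cite{laumon} and Weissauer \cite{wei}: $\pi_F$ then occurs in the middle-degree $\ell$-adic intersection cohomology of the Siegel modular threefold with the $\ell$-adic local system attached to $(k_1,k_2)$, and cutting out the $\pi_F$-isotypic part yields a continuous four-dimensional representation $\rho_{F,\ell}$ unramified outside $\ell N$ with $\det(1_4-\rho_{F,\ell}({\rm Frob}_p)p^{-s})^{-1}=L_p(s-\tfrac{k_1+k_2-3}{2},\pi_F)$ for all $p\nmid\ell N$. For $k_2\ge 3$ this is the regular, cohomological range and is cleanest; for the remaining low weights one combines the same cohomological input with Weissauer's treatment of the non-regular and limit of discrete series cases, or, as in \cite{kim-yam}, proceeds via congruences. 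As emphasized in Section 8, this input relies only on Laumon's and Weissauer's stabilization of the trace formula for $GSp_4$, and not on the weak transfer $GSp_4\to GL_4$ of \cite{arthur}.

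In cases (ii) and (iii), $\rho_{F,\ell}$ is assembled by hand from known one- and two-dimensional $\ell$-adic representations. If $\pi_F$ is a Saito--Kurokawa representation attached to an elliptic newform $g$ (CAP with respect to the Siegel parabolic), set $\rho_{F,\ell}=\rho_{g,\ell}\oplus\chi_\ell^{a}\oplus\chi_\ell^{b}$, with $\chi_\ell$ the $\ell$-adic cyclotomic character and $a,b$ the integers dictated by the shape of $L_p(s,F)$; the remaining CAP families (Howe--Piatetski-Shapiro and Soudry types, attached to the Klingen and Borel parabolics) are treated identically, reading the summands off their $L$-functions. If $\pi_F$ is a Yoshida lift attached to a pair $(g_1,g_2)$ of elliptic newforms, set $\rho_{F,\ell}=\rho_{g_1,\ell}\oplus\rho_{g_2,\ell}$, twisted by a power of $\chi_\ell$ so that the determinant matches the central character of $\pi_F$. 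In each subcase the classification pins down $g$, the $g_i$, and the exponents together with their levels, so enlarging $E$ to contain the relevant Hecke fields and then verifying $\det(1_4-\rho_{F,\ell}({\rm Frob}_p)p^{-s})^{-1}=L_p(s,F)$ at $p\nmid\ell N$ together with unramifiedness outside $\ell N$ is routine, given Deligne's construction in the $GL_2$ case.

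The final assertion is that in case (i) with $k_1\ge k_2\ge 3$ the image of $\rho_{F,\ell}$ may be conjugated into $GSp_4(E_\lambda)$. Here I would quote Weissauer \cite{wei}: the relevant cohomology group carries a $G_\Q$- and Hecke-equivariant nondegenerate pairing obtained from Poincar\'e duality composed with the polarization, so $\rho_{F,\ell}$ is self-dual up to a twist by a power of $\chi_\ell$, and this pairing is alternating rather than symmetric because the four-dimensional spin representation of $GSp_4$ is symplectic; hence $\rho_{F,\ell}$ takes values in $GSp_4$ with similitude character a power of $\chi_\ell$. The main obstacle is case (i): everything hinges on correctly importing the Laumon--Weissauer description of the cohomology of Siegel threefolds — in particular their control of ramification at the primes dividing $N$ and the symplectic self-duality — whereas the CAP and Yoshida cases, and the matching of Euler factors at good primes, are bookkeeping once the classification results are invoked.
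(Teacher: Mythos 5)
Your proposal is correct and follows essentially the same route as the paper, which offers no independent argument for this theorem but simply recalls it from Laumon \cite{laumon} and Weissauer \cite{wei} combined with the classification of CAP representations (\cite{ps}, \cite{soudry}, \cite{schmidt}) and of endoscopic lifts (\cite{roberts}). Your case division (general type via intersection cohomology of the Siegel threefold, CAP and Yoshida cases assembled from $GL_2$ Galois representations, and the symplectic image via Weissauer's polarization/self-duality) is exactly the intended content of those citations.
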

Let us denote by $\brho_{F,\ell}:G_\Q\lra GL_4(\F)$ the reduction modulo $\lambda (\lambda|\ell)$ where 
$\F$ is the residue field of $\lambda$.  

Let $f$ be an elliptic newform of weight one which is neither of dihedral nor of tetrahedral type. Then this gives rise to a unique 
Artin representation $\rho_f:G_\Q\lra GL_2(\C)$. 
Since the image is finite, we can take a finite extension $K$ of $\Q$ so that 
${\rm Im}(\rho_f)\subset GL_2(\mathcal{O})$ where $\mathcal{O}$ is the ring of integers of $K$. 
Then taking the reduction modulo a prime ideal above 
a rational prime $\ell$, we obtain a mod $\ell$ representation 
$\brho_{f,\ell}:G_\Q\lra GL_2(\F)$. 

By Theorem 10.1 of \cite{kim-yam}, there exists 
a real analytic Siegel modular form $F$ of weight $(2,1)$ with 
eigenvalues $-\ds\frac{5}{12}$ (resp. 0) for $\Delta_1$ (resp. $\Delta_2$) (see \cite{kim-yam} for $\Delta_i$) 
such that $\pi_F\sim {\rm Sym}^3(\pi_f)$. By using this, we obtain a mod $\ell$ representation 
$\pi_{F,\ell}:G_\Q\lra GL_4(\F)$ for $F$. 

On the other hand, by multiplying Hasse invariant of weight $\ell-1$, we obtain  
an eigenform $g$ of weight $1+a(p-1)$ for any positive integer $a$ such that 
$g$ is congruent to $f$ modulo $\ell$ hence $\brho_{g,\ell}\simeq \brho_{f,\ell}$. 
By using symmetric cube lift and generic transfer from $GSp_4$ to $GL_4$, one can show the existence of 
a holomorphic Siegel cusp form G of weight $(k_1,k_2)=(2a(\ell-1)+2,a(\ell-1)+1)$ such that 
$\rho_{G,\ell}={\rm Sym}^3(\rho_{g,\ell})$. From this one concludes that there exist  
a non-holomorphic Siegel modular form $F$ of weight $(2,1)$ and 
a holomorphic Siegel modular form $G$ of weight $(2a(\ell-1)+2,a(\ell-1)+1)$ such that 
$$\brho_{F,\ell}\simeq \brho_{G,\ell}.$$ 
We denote by this property $F\equiv G \mod \ell$ provided if the existence of and mod $\ell$ representations of $F$ and $G$ is guaranteed.   

We can also construct such $F$ and $G$ by using endoscopic lift from a pair $(f_1,f_2), \pi_{f_1}\not\simeq \pi_{f_2}$ of 
elliptic newform of weight one whose central characters are same as follows. 
By using theta lift  (cf \cite{roberts}) and Section 5 of \cite{kim-yam}, there exists a 
a real analytic Siegel modular form of weight $(2,1)$ as above such that 
$\rho_{F,\ell}\simeq \rho_{f_1}\oplus \rho_{f_2}$. 
By multiplying Hasse invariant again, one has a pair of 
elliptic modular forms  $(g_1,g_2),\pi_{g_1}\not\simeq \pi_{g_2}$ of 
elliptic newform $g_1$ (resp. $g_2$) of weight $r_1=1+a(\ell-1)$ (resp. $r_2=1+b(\ell-1)$) 
with the same central character. Then by using theta lift (cf \cite{roberts}), 
one can construct a holomorphic Siegel cusp form $G$ of weight 
$(k_1,k_2)=(\frac{(\ell-1)(a+b)}{2}+1,\frac{(\ell-1)(a-b)}{2}+2)$ such that 
$\rho_{F,\ell}\simeq \rho_{g_1,\ell}\oplus  \rho_{g_2,\ell}$. 
Taking reduction modulo $\ell$, one concludes $F\equiv G \mod \ell $. 

For such $F$ (and $G$), the mod $\ell$ representation $\brho_{F,\ell}$ has a remarkable property 
that $\brho_{F,\ell}$ is unramified at $\ell$. In case elliptic newform, this property characterizes 
a weight $\ell$ form so that it comes from a weight one form by multiplying Hasse invariant of 
weight $\ell-1$.  This principle is discussed in Proposition 2.7 of \cite{edix} which plays an important role for proving 
Serre conjecture. So this gives rise to the 
following natural question: 
\begin{ques}\label{mazur}Let $G$ be a holomorphic Siegel cusp form of weight $(k_1,k_2)$ so that 
$k_1-1$ and $k_2-2$ are both divided by $\ell-1$, where $\ell$ is a rational prime. 
Assume that $\brho_{G,\ell}$ is unramified at $\ell$. 
Can one associate a non-holomorphic Siegel cusp form $F$ of weight $(2,1)$ with 
a mod $\ell$ representation such that $F\equiv G\mod \ell$?  
\end{ques}

\section{Supplement to our paper \cite{kim-yam}}

In \cite{kim-yam}, we used Arthur's conjectural result on the correspondence between cuspidal representations of 
$GSp_4$ and $GL_4$ \cite{arthur}. It depends on the stabilization of the trace formula, which is not proved yet. In this section, we explain how to get around this by using the transfer from $Sp_4$ to $GL_5$ in \cite{arthur1}. The result depends on the twisted fundamental lemma which may have been resolved by now. 

Let $\pi=\pi_F$ be the cuspidal representation of $GSp_4(\A_\Q)$ attached to the Siegel cusp form $F$ of weight $(2,1)$. We showed in \cite{kim-yam} that $\pi_F$ is not a CAP representation.
Let $\pi'$ be one of components of $\pi|_{Sp_4(\Bbb A)}$. Then it is a cuspidal representation of $Sp_4(\A_\Q)$.
By \cite{arthur1}, $\pi'$ corresponds to an automorphic representation $\Pi_5$ of $GL_5$. Since $\pi'$ is not a CAP representation, 
$\Pi_5$ is either cuspidal or an isobaric representation.

By using the descent construction \cite{GRS}, we can find a globally generic cuspidal representation $\tau'$ of $Sp_4(\A_\Q)$ which is in the same $L$-packet as $\pi'$. Now let $\tau$ be a globally generic cuspidal representation of $GSp_4(\A_\Q)$ such that $\tau'$ occurs in the restriction $\tau|_{Sp_4(\Bbb A)}$. By \cite{asgari&shahidi}, we have a functorial lift $\Pi$ of $\tau$ as an automorphic representation of $GL_4$.
This $\Pi$ is the transfer of $\pi$. 
We can see easily that $\wedge^2(\Pi)=\Pi_5\otimes\omega_{\pi}\boxplus \omega_{\pi}$, i.e.,
$\Pi_5$ is the transfer of $\pi$ to $GL_5$ corresponding to the $L$-group homomorphism $GSp_4(\Bbb C)\longrightarrow GL_5(\C)$. Hence we do not need the exterior square lift of $\Pi$ in \cite{kim1} in order to obtain $\Pi_5$.

\section{Non-holomorphic Siegel cusp forms of weight $(2,1)$ attached to cusp forms on imaginary quadratic fields}\label{imag}

In this section, as a supplement to our paper \cite{kim-yam}, we use the idea of \cite{BR} to construct a non-holomorphic Siegel cusp form of weight $(2,1)$ attached to Maass forms for $GL_2/\Q$ and 
cuspidal representations of $GL_2$ over imaginary quadratic fields. This idea was used by
Harris, Soudry, and Taylor \cite{HST} to construct holomorphic Siegel cusp forms from certain modular forms over imaginary quadratic fields. 

Let $K=\Bbb Q[\sqrt{-D}]$ be an imaginary quadratic field.
Let ${\rm Gal}(K/\Q)=\{1,\theta\}$, and $\omega_{K/\Q}$ be the quadratic character attached to $K/\Q$ i.e., 
$\omega_{K/\Q}(p)=(\frac {-D}p)$.

Let $\bold G=R_{K/\Q} GL_2$ be the quasi-split group obtained by the restriction of scalars. Then
${}^L\bold G=(GL_2(\C)\times GL_2(\C))\rtimes {\rm Gal}(K/\Q)$, and $\bold G(\Bbb A)=GL_2(\Bbb A_K)$.
Let $\pi=\pi_{\infty}\otimes \otimes_p' \pi_p$ be a cuspidal representation of $\bold G(\Bbb A)$. 
Here $\pi_{\infty}$ is a unitary representation of $GL_2(\C)$. If $p$ splits in $K$ into $(v_1,v_2)$, then $\pi_p=\pi_{v_1}\otimes\pi_{v_2}$. We make the following assumption on $\pi$.

\begin{assump}\label{assump1} $\omega_{\pi}$ factors through $N_{K/\Q}$, i.e,
$\omega_{\pi}=\omega\circ N_{K/\Bbb Q}$ with a gr\"ossencharacter $\omega$.
\end{assump}

The automorphic induction corresponds to the $L$-group homomorphism
$$I_K^\Q: {}^L\bold G\longrightarrow GL(\C^2\oplus \C^2)\simeq GL_4(\Bbb C),\quad I_K^\Q(g,g';1)(x\oplus y)=g(x)\oplus g'(y), \quad 
I_K^\Q(1,1;\theta)(x\oplus y)=y\oplus x.
$$

Let $I_K^\Bbb Q \pi$ be the automorphic induction. It is automorphic representation of $GL_4/\Bbb Q$, and it is not cuspidal if and only if $\pi\simeq \pi\circ \theta$. In that case, $\pi$ is a base change of a cuspidal representation $\pi_0$ of $GL_2(\A_\Q)$, and
$I_K^\Bbb Q \pi=\pi_0\boxplus (\pi_0\otimes\omega_{K/\Bbb Q})$.

The Asai lift corresponds to the $L$-group homomorphism
$$As: {}^L\bold G\longrightarrow GL(\C^2\otimes \C^2)\simeq GL_4(\Bbb C),\quad As(g,g';1)(x\otimes y)=g(x)\otimes g'(y), \quad 
As(1,1;\theta)(x\otimes y)=y\otimes x.
$$

If $\rho: G_K\longrightarrow GL_2(\Bbb C)$, we have \cite{kim}
$$
\wedge^2 (\text{Ind}_K^\Q (\rho))= (As(\rho)\otimes \omega_{K/\Q}) \oplus \text{Ind}_K^\Q (\det\rho).
$$
Hence if $\rho$ corresponds to $\pi$, $det(\rho)$ corresponds to $\omega_{\pi}$. If $\pi$ satisfies Assumption \ref{assump1}, $I_K^\Q\, \omega_{\pi}=\omega\oplus \omega\omega_{K/\Bbb Q}$. Hence we can $L(s, \wedge^2(I_K^\Bbb Q)\otimes \chi^{-1})$ with $\chi=\omega$ or $\omega\omega_{K/\Bbb Q}$, has a pole at $s=1$, and 
$I_K^\Bbb Q \pi$ descends to a cuspidal representation of $GSp_4(\A_\Q)$ with the central character $\chi$ (cf. \cite{asgari&shahidi-spin}).

Let $\pi_{\infty}=\pi(1,1)$. Then the Langlands' parameter of $\pi_{\infty}$ is 
$$\phi: W_{\C}=\C^{\times}\longrightarrow (GL_2(\C)\times GL_2(\C))\rtimes {\rm Gal}(K/\Q),\quad \phi(z)=(I,I; \theta).
$$
So the Langlands' parameter of $I_K^\Q (\pi_{\infty})$ is
$$\phi: W_{\R}\longrightarrow GL_4(\C),\quad \phi(z)=Id,\quad \phi(j)=\begin{pmatrix} 0&I_2\\I_2&0\end{pmatrix}.
$$

Here if $P=\frac 12\begin{pmatrix} 1&0&1&0\\0&-1&0&1\\1&0&-1&0\\0&1&0&1\end{pmatrix}$, 
$$
P^{-1}\begin{pmatrix} 0&I_2\\I_2&0\end{pmatrix} P=\diag(1,-1,-1,1),\quad {}^t P \begin{pmatrix} 0&I_2\\-I_2&0\end{pmatrix} P=-\frac 12\begin{pmatrix} 0&I_2\\-I_2&0\end{pmatrix}.
$$ 
So $\begin{pmatrix} 0&I_2\\I_2&0\end{pmatrix}$ is conjugate to $\diag(1,-1,-1,1)$ in $GSp_4(\C)$, and up to conjugacy, we have
$$\phi: W_\R \lra GSp_4(\C),\quad \phi(z)=Id,\quad \phi(j)=\diag(1,-1,-1,1).
$$ 
Then $\phi$ is the Langlands' parameter for $\text{Ind}_B^{GSp_4}\, 1\otimes sgn\otimes sgn$, and as in \cite{kim-yam}, we can show that there exists a Siegel cusp form $F$ of weight (2,1) corresponding to $I_K^\Bbb Q \pi$. We have proved 

\begin{thm} Let $\pi$ be a cuspidal representation of $GL_2(\A_K)$, $K=\Bbb Q[\sqrt{-D}]$ which satisfies Assumption \ref{assump1},
and $\pi_\infty=\pi(1,1)$. Then there exists a non-holomorphic Siegel cusp form $F$ of weight $(2,1)$ such that $L(s,\pi_F)=L(s,\pi)$.
\end{thm}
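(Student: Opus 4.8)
The plan is to realize the final theorem as a direct application of the machinery built up in the surrounding text, following the template of our earlier work \cite{kim-yam}. Starting from a cuspidal representation $\pi$ of $GL_2/K$ satisfying Assumption \ref{assump1}, I would first form the automorphic induction $I_K^\Q\pi$, which by the cited results on automorphic induction for quadratic extensions is an automorphic representation of $GL_4/\Q$; cuspidality is not needed here, only automorphy. The key computation, already recorded just above the statement, is the identity $\wedge^2(\mathrm{Ind}_K^\Q\rho)=(As(\rho)\otimes\omega_{K/\Q})\oplus\mathrm{Ind}_K^\Q(\det\rho)$; combined with Assumption \ref{assump1}, which gives $I_K^\Q\omega_\pi=\omega\oplus\omega\omega_{K/\Q}$, this shows that $L(s,\wedge^2(I_K^\Q\pi)\otimes\chi^{-1})$ has a pole at $s=1$ for $\chi=\omega$ or $\chi=\omega\omega_{K/\Q}$. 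By the descent criterion of Asgari--Shahidi (\cite{asgari&shahidi-spin}), this pole forces $I_K^\Q\pi$ to descend to a cuspidal representation of $GSp_4/\Q$ with central character $\chi$.

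Next I would pin down the archimedean component. The hypothesis $\pi_\infty=\pi(1,1)$ translates into the Langlands parameter $\phi(z)=(I,I;\theta)$ of $W_\C$ into ${}^L\mathbf{G}$, so after automorphic induction the parameter of $I_K^\Q(\pi_\infty)$ is $\phi(z)=\mathrm{Id}$, $\phi(j)=\begin{pmatrix}0&I_2\\I_2&0\end{pmatrix}$. Using the explicit matrix $P$ displayed above, one checks that this $\phi(j)$ is $GSp_4(\C)$-conjugate to $\diag(1,-1,-1,1)$, and that $P$ simultaneously conjugates the symplectic form appropriately, so that the descended parameter into $GSp_4(\C)$ is $\phi(z)=\mathrm{Id}$, $\phi(j)=\diag(1,-1,-1,1)$. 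This is exactly the Langlands parameter of $\mathrm{Ind}_B^{GSp_4}1\otimes\mathrm{sgn}\otimes\mathrm{sgn}$, which is the archimedean component of a non-holomorphic Siegel modular form of weight $(2,1)$ in the sense of \cite{kim-yam}.

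Finally, I would invoke the construction of \cite{kim-yam} verbatim: given a cuspidal representation $\Pi$ of $GSp_4/\Q$ whose archimedean component has the parameter above, there is an associated non-holomorphic Siegel cusp form $F$ of weight $(2,1)$ with $\pi_F=\Pi$, and hence $L(s,\pi_F)=L(s,I_K^\Q\pi)=L(s,\pi)$, the last equality being the defining property of automorphic induction at the level of $L$-functions. The main obstacle, and the only place where genuine care is required, is the archimedean bookkeeping in the step where one passes from the $GL_4$ parameter to the $GSp_4$ parameter: one must verify not only that $\begin{pmatrix}0&I_2\\I_2&0\end{pmatrix}$ and $\diag(1,-1,-1,1)$ are $GL_4$-conjugate but that the conjugating element can be chosen inside $GSp_4(\C)$ for a suitable symplectic form, which is precisely what the second displayed identity involving ${}^tP$ guarantees. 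Once that compatibility is in hand, the theorem follows by assembling the cited results, exactly parallel to the argument in \cite{kim-yam}.
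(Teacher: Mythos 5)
Your proposal reproduces the paper's own argument step for step: automorphic induction $I_K^\Q\pi$ to $GL_4/\Q$, the identity $\wedge^2(\mathrm{Ind}_K^\Q\rho)=(As(\rho)\otimes\omega_{K/\Q})\oplus\mathrm{Ind}_K^\Q(\det\rho)$ together with Assumption \ref{assump1} to produce the pole of the twisted exterior square $L$-function and the Asgari--Shahidi descent to $GSp_4/\Q$, the conjugation of the archimedean parameter $\phi(j)=\begin{pmatrix}0&I_2\\I_2&0\end{pmatrix}$ into $\diag(1,-1,-1,1)$ inside $GSp_4(\C)$ via the matrix $P$, and finally the construction of \cite{kim-yam} attached to the parameter of $\mathrm{Ind}_B^{GSp_4}1\otimes\mathrm{sgn}\otimes\mathrm{sgn}$. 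This is essentially the same approach as the paper, and it is correct.
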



\section{Non-holomorphic Siegel cusp forms of weight $(2,1)$ attached to Maass forms over $\Q$} 

Let $\pi$ be a cuspidal representation of $GL_2(\A_\Q)$ such that $\pi_\infty=\pi(1,1)$, i.e., Maass cusp form.
The Langlands parameter of $\pi_\infty$ is
$$\phi: W_{\R}\longrightarrow GL_2(\C),\quad \phi(z)=I_2,\quad \phi(j)=I_2.
$$
Let $BC(\pi)$ be the base change to $K=\Bbb Q[\sqrt{-D}]$, and consider 
$$\Pi=I_K^\Q (BC(\pi))=\pi\boxplus (\pi\otimes \omega_{K/\Q}).
$$
Then $\Pi$ descends to a generic cuspidal representation $\tau$ of $GSp_4(\A_\Q)$ (cf. \cite{asgari&shahidi-spin}).
The Langlands parameter of $\Pi_\infty$ is 

$$\phi: W_{\R}\longrightarrow GL_4(\C),\quad \phi(z)=Id, \quad \phi(j)=\diag(1, 1, -1, -1).
$$
Then we can show easily that for $s_2=\begin{pmatrix} 1&0&0&0\\0&0&0&1\\0&0&1&0\\0&-1&0&0\end{pmatrix}$, which is the Weyl group element corresponding to the long simple root, 
$$
s_2^{-1} \diag(1,1,-1,-1) s_2=\diag(1,-1,-1,1).
$$
Hence $\diag(1,1,-1,-1)$ and $\diag(1,-1,-1,1)$ are conjugate in $GSp_4(\C)$, and up to conjugacy, we have
$$\phi: W_\R \lra GSp_4(\C),\quad \phi(z)=Id,\quad \phi(j)=\diag(1,-1,-1,1).
$$ 
Since $\phi$ is the Langlands' parameter for $\text{Ind}_B^{GSp_4}\, 1\otimes sgn\otimes sgn$, as in \cite{kim-yam}, we can show that 
there exists a Siegel cusp form $F$ of weight (2,1) corresponding to $I_K^\Bbb Q (BC(\pi))$. We have proved 

\begin{thm} Let $\pi$ be a cuspidal representation of $GL_2(\A_\Q)$ such that $\pi_\infty=\pi(1,1)$. Then there exists a non-holomorphic Siegel cusp form $F$ of weight $(2,1)$ such that $L(s,\pi_F)=L(s,\pi)L(s,\pi\otimes\omega_{K/\Q})$.
\end{thm}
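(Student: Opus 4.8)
The plan is to run the same descent-to-$GSp_4$ machinery used in the two theorems immediately preceding this one, but starting from a Maass cusp form on $GL_2/\Q$ rather than from a cuspidal representation over an imaginary quadratic field. First I would fix an auxiliary imaginary quadratic field $K=\Q[\sqrt{-D}]$ and form the base change $BC(\pi)$ of $\pi$ to $GL_2/K$; this is automorphic by the cyclic base change of Arthur--Clozel, and it is cuspidal precisely when $\pi$ is not dihedral with respect to $K$ (which one can always arrange by choosing $D$ appropriately, or else one handles the dihedral case separately since then $\pi$ itself is already Artin). Next I would apply automorphic induction $I_K^\Q$ back down to $GL_4/\Q$, giving the isobaric representation $\Pi=I_K^\Q(BC(\pi))=\pi\boxplus(\pi\otimes\omega_{K/\Q})$, and check that $\Pi\simeq\tilde\Pi\otimes\omega_\Pi$ with the self-duality needed for a $GSp_4$-descent, using that $\omega_\pi$ and $\omega_{K/\Q}$ are appropriately compatible.

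Then I would invoke the descent criterion: $\Pi$ descends to a globally generic cuspidal representation $\tau$ of $GSp_4/\Q$ as soon as the relevant exterior-square (partial) $L$-function $L(s,\wedge^2\Pi\otimes\chi^{-1})$ has a pole at $s=1$ for a suitable $\chi$ — this is exactly the input from \cite{asgari&shahidi-spin} that was used in the previous theorem, and for $\Pi=\pi\boxplus(\pi\otimes\omega_{K/\Q})$ the pole is visible from the factorization of $\wedge^2$ of an isobaric sum. The key local computation, which is already carried out in the displayed formulas above, is the identification of the archimedean Langlands parameter: $\Pi_\infty$ has parameter $\phi(z)=\mathrm{Id}$, $\phi(j)=\diag(1,1,-1,-1)$, and conjugating by the Weyl element $s_2$ shows this is $GSp_4(\C)$-conjugate to $\diag(1,-1,-1,1)$, i.e. to the parameter of $\mathrm{Ind}_B^{GSp_4}1\otimes\mathrm{sgn}\otimes\mathrm{sgn}$. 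Hence $\tau_\infty$ is the non-holomorphic discrete-series-type representation of $GSp_4(\R)$ that corresponds to Siegel modular weight $(2,1)$, and translating $\tau$ into classical language (as done in \cite{kim-yam}) produces the desired non-holomorphic Siegel cusp form $F$ of weight $(2,1)$; the $L$-function identity $L(s,\pi_F)=L(s,\pi)L(s,\pi\otimes\omega_{K/\Q})$ then follows from $L(s,\pi_F)=L(s,\Pi)$ together with the isobaric decomposition of $\Pi$.

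The main obstacle I expect is not any single formula but rather verifying that all the hypotheses of the descent theorem are genuinely met when the source is a Maass form: one must ensure $\Pi$ is not itself a CAP or endoscopic degenerate case that would obstruct getting an honest cusp form on $GSp_4$, and one must check that the choice of $K$ (hence of $\omega_{K/\Q}$) can be made so that $BC(\pi)$ is cuspidal and the central-character condition for the $GSp_4$-descent holds simultaneously. Once those compatibility conditions are pinned down, the archimedean parameter calculation is exactly the one already displayed, and the rest is bookkeeping parallel to \cite{kim-yam}.
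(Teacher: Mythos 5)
Your proposal follows essentially the same route as the paper: form $\Pi=I_K^\Q(BC(\pi))=\pi\boxplus(\pi\otimes\omega_{K/\Q})$, descend it to a globally generic cuspidal representation of $GSp_4/\Q$ via the pole criterion of Asgari--Shahidi, identify the archimedean parameter $\diag(1,1,-1,-1)$ as $GSp_4(\C)$-conjugate (via $s_2$) to the parameter of $\mathrm{Ind}_B^{GSp_4}1\otimes\mathrm{sgn}\otimes\mathrm{sgn}$, and conclude via \cite{kim-yam} that this yields a weight $(2,1)$ non-holomorphic Siegel cusp form with the stated $L$-function. Your added attention to the dihedral case (i.e.\ $\pi\simeq\pi\otimes\omega_{K/\Q}$) and to central-character compatibility is a reasonable refinement of hypotheses the paper leaves implicit, but it does not change the argument.
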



\section{Appendix}

Let $\pi=\otimes'_p \pi_p$ be a cuspidal representation of $GL_n(\A_\Q)$ with conductor $N$.
We show that the Langlands functoriality of the exterior $m$-th power $\wedge^m(\pi)$ as an automorphic representation of $GL_{C_{n,m}}(\Bbb A_\Q)$ implies Assumption (5). 

First, recall the following fact on the local $L$-factors of the ramified places:
Let $\Pi=\otimes'_p \Pi_p$ be a cuspidal representation of $GL_M(\A_\Q)$. Then for each prime $p$, $\Pi_p$ is unitary and generic. Recall that a non-tempered, unitary and generic representation of $GL_N(\Q_p)$ can be written as a full induced representation
\begin{equation}\label{rama}
{\rm Ind}\, \eta_1|\det|^{r_1}\otimes\cdots\otimes \eta_k|\det|^{r_k}\otimes\eta_0\otimes \eta_k|\det|^{-r_k}\otimes\cdots\otimes \eta_1|\det|^{-r_1},
\end{equation}
where $\eta_1,...,\eta_k$ are unitary square-integrable representations of $GL_{n_1}(\Q_p),...,GL_{n_k}(\Q_p)$, resp. and $\eta_0$ is a tempered representation of $GL_{n_0}(\Q_p)$ such that $n_0+n_1+\cdots+n_k=M$, and $0<r_k\leq \cdots \leq r_1\leq\frac 12-\frac 1{M^2+1}$ 
(\cite{T1}). (See \cite{LRS} for the bound.) Then by \cite{RS},
\begin{eqnarray}\label{L-fun}
&& L(s,\Pi_p)=L(s,\eta_0)\prod_{i=1}^k L(s\pm r_i,\eta_i), \nonumber\\
&& L(s,\Pi_p\times\widetilde\Pi_p)=\prod_{i,j=1}^k L(s\pm r_i\pm r_j, \eta_i\times \tilde\eta_j) \prod_{i=1}^k L(s\pm r_i, \eta_i\times\tilde\eta_0)L(s\pm r_i, \eta_0\times\tilde\eta_i).
\end{eqnarray}
Note that for any $i,j$, $L(s,\eta_i)$ and $L(s,\eta_i\times\tilde\eta_j)$ are holomorphic for $Re(s)>0$.

\medskip

For $1\leq m\leq n$, let $\wedge^m: GL_n(\Bbb C)\longrightarrow GL_{C_{n,m}}(\Bbb C)$ be the exterior $m$-th power, and let
$$
L(s,\pi,\wedge^m)=\sum_{n=1}^\infty \frac {\lambda_m(n)}{n^{s}}
$$
be the exterior $m$-th power $L$-function. Then it is easy to see that, for a prime $p\nmid N$,
$$\lambda_m(p)=a_m(p),\quad \text{for all $m=1,...,n$}.
$$

For each $p$, by the local Langlands' correspondence, $\wedge^m(\pi_p)$ is a well-defined representation of $GL_{C_{n,m}}(\Bbb Q_p)$: Let $\phi_p: W_{\Bbb Q_p}\times SL_2(\Bbb C)\longrightarrow GL_n(\Bbb C)$ be the parametrization of $\pi_p$. Then we have a map $\wedge^m(\phi_p): W_{\Bbb Q_p}\times SL_2(\Bbb C)\longrightarrow GL_{C_{n,m}}(\Bbb C)$. Then $\wedge^m(\pi_p)$ is the representation of $GL_{C_{n,m}}(\Bbb Q_p)$, corresponding to $\wedge^m(\phi_p)$.
Let $\wedge^m(\pi)=\otimes'_p \wedge^m(\pi_p)$. It is an irreducible admissible representation of $GL_{C_{n,m}}(\Bbb A_{\Bbb Q})$.

\begin{conjecture} (Langlands functoriality conjecture) The exterior $m$-th power $\Pi_m=\wedge^m(\pi)$ is an automorphic representation of $GL_{C_{n,m}}(\Bbb A_{\Bbb Q})$.
\end{conjecture} 

Since $\wedge^m(\pi)=\wedge^{n-m}\tilde\pi\otimes \omega$, it is enough to consider it for $m\leq [\frac n2]$. 
If $n\leq 3$, it is trivial. When $n=4$, it is proved in \cite{kim1}.

Suppose $\Pi_m$ is an automorphic representation of $GL_{C_{n,m}}(\Bbb A_{\Bbb Q})$. Consider the Rankin-Selberg $L$-function $L(s,\Pi_m\times\widetilde\Pi_m)$. 

\begin{prop} \label{RS}
There exists a holomorphic function $g(s)$ near $s=1$ such that
$$\log L(s,\Pi_m\times\widetilde\Pi_m)=\sum_{p\nmid N} \frac {|a_m(p)|^2}{p^s}+g(s),
$$ 
and if $s>1$,
$$\sum_{p\nmid N} \frac {|a_m(p)|^2}{p^s}\leq C_{n,m}^2\log\frac 1{s-1}+O(1),\quad \text{as $s\to 1^+$.}
$$
\end{prop}
\begin{proof} Let $r_m=C_{n,m}$, and $\displaystyle L(s,\Pi_m)=\prod_{p\nmid N} \prod_{i=1}^{r_m} (1-\beta_i(p)p^{-s})^{-1} \prod_{p|N} L_p(s)$, where $L_p(s)$ is a ramified factor as in (\ref{L-fun}). Then
$\displaystyle \log L(s,\Pi_m)=\sum_{l=1}^\infty \sum_{p\nmid N} \frac {b(p^l)}{l p^{ls}}+g'(s)$, where for $p\nmid N$, $b(p^l)=\beta_1(p)^l+\cdots+\beta_{r_m}(p)^l$. Hence
$b(p)=a_m(p)$ for $p\nmid N$. Here $g'(s)$ is a holomorphic function for $Re(s)>\frac 12$ by (\ref{L-fun}) and the fact that the local factors are non-vanishing.
Then we can easily see that

$$\log L(s,\Pi_m\times\widetilde\Pi_m)=\sum_{l=1}^\infty \sum_{p\nmid N} \frac {|b(p^l)|^2}{l p^{ls}}+g''(s)
=\sum_{p\nmid N} \frac {|a_m(p)|^2}{p^{s}}+\sum_{l=2}^\infty \sum_{p\nmid N} \frac {|b(p^l)|^2}{l p^{ls}}+g''(s),
$$
where $g''(s)$ is a holomorphic function for $Re(s)> 1-\frac 2{r_m^2+1}$ by (\ref{L-fun}).

Now we show that $\displaystyle \sum_{l=2}^\infty \sum_{p\nmid N} \frac {|b(p^l)|^2}{l p^{ls}}$ converges for $Re(s)> 1-\frac 2{r_{[\frac n2]}^2+1}$. By the classification of spherical unitary generic representation of $GL_n(\Bbb Q_p)$ \cite{T}, if $p\nmid N$, $\alpha_1(p),...,\alpha_n(p)$ is of the form
$$u_1 p^{a_1},\, u_2 p^{a_2},...,u_{k_p} p^{a_{k_p}},\, u_{k_p+1},...,u_{k_p+k_p'}, u_{k_p} p^{-a_{k_p}},...,u_1 p^{-a_1},
$$
where $u_i\in\Bbb C$ and $|u_i|=1$ for all $i$, and $0<a_{k_p}\leq \cdots \leq a_1\leq \frac 12-\frac 1{n^2+1}$. 
Now let $S_0$ be the set of primes where $\pi_p$ is tempered, and for $0<k\leq [\frac n2]$,
 let $S_k$ be the set of primes where $k_p=k$. Then
$\displaystyle \sum_{l=2}^\infty \sum_{p\in S_0} \frac {|b(p^l)|^2}{l p^{ls}}$ converges for $Re(s)>\frac 12$.

Now for each $0<k\leq [\frac n2]$, consider $\displaystyle \sum_{l=2}^\infty \sum_{p\in S_k} \frac {|b(p^l)|^2}{l p^{ls}}$.
Recall that $a_m(p)=\sum_{i_1<\cdots<i_m} \alpha_{i_1}(p)\cdots\alpha_{i_m}(p)$. Hence
$|b(p^l)|\ll p^{l(a_1+\cdots+a_m)}$, where we let $a_i=0$ if $i>k$. Also we have
$|a_k(p)|\gg p^{a_1+\cdots+a_k}$. Therefore, 
$$|b(p^l)|\ll |a_k(p)|^l.
$$
By assumption, $\wedge^k\pi$ is an automorphic representation of $GL_{r_k}/\Q$. Hence 
by \cite{LRS}, $|a_k(p)|\leq r_k p^{\frac 12-\frac 1{r_k^2+1}}$. Hence
$$\sum_{p\in S_k} \frac {|b(p^l)|^2}{l p^{ls}}\ll \sum_{p\in S_k} \frac {|a_k(p)|^2}{p^{l Re(s)-l+1+\frac {2(l-1)}{r_k^2+1}}}.
$$
Since $\wedge^k\pi$ is automorphic, by considering the Rankin-Selberg $L$-function $L(s,\wedge^k\pi\times \widetilde{\wedge^k\pi})$,
we have $\sum_p |a_k(p)|^2\ll x$. Hence by partial summation,
$$\sum_{p\in S_k} \frac {|a_k(p)|^2}{p^{l Re(s)-l+1+\frac {2(l-1)}{r_k^2+1}}}\ll 2^{-l Re(s)+l-\frac {2(l-1)}{r_k^2+1}}.
$$
Now $\displaystyle \sum_{l=2}^\infty 2^{-l Re(s)+l-\frac {2(l-1)}{r_k^2+1}}$ converges for $Re(s)>1-\frac 2{r_k^2+1}$. 
Hence our result follows.

Now
$L(s,\Pi_m\times\widetilde\Pi_m)$ has a pole at $s=1$, of order at least 1, and at most $r_m^2$. Also
$L(s,\Pi_m\times\widetilde\Pi_m)$ is zero free for $Re(s)>1-\frac c{(1+|t|)^n}$ for some constant $c>0$ (for example, \cite{B}).
Hence as $s\to 1^+$,
$$\log L(s,\Pi_m\times\widetilde\Pi_m)\leq r_m^2\log \frac 1{s-1}+O(1).
$$
This completes the proof.
\end{proof}

The above proof shows the following. (A different proof was given in \cite{cho-kim}.)

\begin{cor} Let $\displaystyle \log L(s,\pi)=\sum_{l=1}^\infty \sum_{p\nmid N} \frac {a(p^l)}{l p^{ls}}+g(s)$, where $a(p^l)=\alpha_1(p)^l+\cdots +\alpha_n(p)^l$ for $p\nmid N$, and $g(s)$ is a holomorphic function for $Re(s)>\frac 12$. Then the functoriality of $\wedge^m\pi$ for all $m$ implies Hypothesis $H$ in \cite{RS}, namely, for each $l\geq 2$,
$$\sum_{p\nmid N} \frac {|a(p^l)|^2 (\log p)^2}{p^{l}}
$$
converges.
\end{cor}


\begin{thebibliography}{99}
\bibitem{A} J.L. Alperin, {\em Local Representation Theory}, Cambridge studies in Adv. Math. 11 (1986).
\bibitem{arthur} J. Arthur, {\em Automorphic representations of $GSp(4)$},
Contributions to automorphic forms, geometry, and number theory, 65--81, Johns Hopkins Univ. Press, Baltimore, 2004. 

\bibitem{arthur1} J. Arthur, {\em The Endoscopic Classification of Representations: Orthogonal and Symplectic Groups}, AMS, 2013.
\bibitem{asgari&shahidi} M. Asgari and F. Shahidi, {\em Generic transfer from $GSp(4)$ to $GL(4)$}, Comp. Math. {\bf 142} (2006), 
541--550.
\bibitem{asgari&shahidi-spin} \bysame, {\em Image of functoriality for general spin groups}, 
Manuscripta Math. {\bf 144} (2014), no. 3-4, 609--638.

\bibitem{BR} D. Blasius and D. Ramakrishnan, {\em Maass forms and Galois representations}. Galois groups over $\Q$  (Berkeley, CA, 1987), 33-77, Math. Sci. Res. Inst. Publ., 16, Springer, New York, 1989.
\bibitem{B} F. Brumley, {\em Effective multiplicity one on GLN and narrow zero-free regions for Rankin-Selberg $L$-functions}, Amer. J. Math. {\bf 128} (2006), no. 6, 1455-1474.
\bibitem{C} N. Chavdarov, {\em The generic irreducibility of the numerator of the zeta function in a family of curves with large monodromy}, Duke Math. J. {\bf 87} (1997), 151-180.

\bibitem{cho-kim} P.J. Cho and H.Kim, {\em $n$-level densities of Artin $L$-functions}, Int. Math. Res. Not. IMRN 2015, no. 17, 
7861--7883.
\bibitem{d&s} P. Deligne and J-P. Serre, {\em Formes modulaires de poids 1}, Ann. Sci. Ecole Norm. Sup. {\bf 7} (1974), 507-530. 
\bibitem{DM} D. Dummit and R. Foote, {\em Abstract Algebra}, Third Ed. John Wiley \& Sons, 2004.
\bibitem{edix} B. Edixhoven, {\em The weight in Serre's conjectures on modular forms}, Inv. Math. {\bf 109} (1992), 
563-594. 
\bibitem{GRS} D. Ginzburg, S. Rallis and D. Soudry, {\em On explicit lifts of cusp forms from $GL_m$ to classical groups}, Ann. of Math. {\bf 150} (1999), 807--866.
\bibitem{gura} R. Guralnick, {\em Small representations are completely reducible}, J. Algebra, {\bf 220} (1999), 531-541.
\bibitem{ghtt} R. Guralnick, F. Herzig, R. Taylor, and J. Thorne, {\em Adequate subgroups, Appendix to On the automorphy of $l$-adic Galois representations with small residual image by J. Thorne}, J. de l'Inst. de Math. de Jussieu {\bf 11} (2012), 907-920. 

\bibitem{HST} M. Harris, D. Soudry, and R. Taylor, {\em  $l$-adic representations associated to modular forms over imaginary quadratic fields}, Inv. Math. {\bf 112} (1993), 377-411. 
\bibitem{kim} H. Kim, {\em An application of exterior square functoriality of $GL_4$; Asai lift}, CRM Proceedings and Lecture notes, vol 36, 2004, 197--202.
\bibitem{kim1} \bysame, {\em Functoriality for the exterior square of $GL_4$ and symmetric fourth of $GL_2$. With appendix 1 by Dinakar Ramakrishnan and appendix 2 by Kim and Peter Sarnak},
J. of AMS {\bf 16} (2003), 139--183.
\bibitem{kim-yam} H. Kim and T. Yamauchi, {\em A conditional construction of Artin representations for $GSp_4$ for  
real analytic Siegel cusp forms of weight $(2,1)$}, to appear in J. Cogdell's 60th birthday volume.
\bibitem{Kn} A.W. Knapp, {\em Local Langlands correspondence: the Archimedean case}. Motives (Seattle, WA, 1991), Proc. Sympos. Pure Math., 55, Part 2, Amer. Math. Soc., Providence, RI, 1994, 393-410.
\bibitem{lp} M.J. Larsen and R. Pink, {\em Finite subgroups of algebraic groups}, J. of AMS, {\bf 24} (2011), 1105-1158. 
\bibitem{mt} G. Malle and D. Testerman, {\em Linear algebraic groups and finite groups of Lie type}, Cambridge studies in Adv. Math. 133 (2011).
\bibitem{laumon} G. Laumon, {\em Sur la cohomologie a supports compacts des varietes de Shimura pour $GSp(4)_\Q$}, 
Comp. Math. 105 (1997), 267-359. 

\bibitem{LRS} W. Luo, Z. Rudnick and P. Sarnak, {\em On the generalized Ramanujan conjecture for $GL(n)$}, Proc. Symp. Pure Math. {\bf 66}, Part 2, AMS, 1999, 301--310.
\bibitem{martin} K. Martin, {\em Four-dimensional Galois representations of solvable type and automorphic forms}, 
Thesis at Caltech, 2004.
\bibitem{N} M. Nori, {\em On subgroups of $GL_n(\F_p)$}, Inv. Math. {\bf 88} (1987), 257-275.
\bibitem{ps} I. I. Piatetski-Shapiro, {\em On the Saito-Kurokawa lifting}, Inv. Math. 71 (1983), 309-338.
\bibitem{RZ} L. Ribes and P. Zalesskii, Profinite Groups. Second edition. Springer-Verlag, Berlin, 2010. xvi+464 pp.  
\bibitem{roberts} B. Roberts, {\em Global $L$-packets for $GSp(2)$ and theta lifts}, Docu. math {\bf 6} (2001), 247--314.

\bibitem{rog&tunnell} J. Rogawski and J. Tunnell, {\em On Artin $L$-functions associated to Hilbert modular forms of weight one}, Inv. Math. 74 (1983), 1-42. 
\bibitem{RS} Z. Rudnick and P. Sarnak, {\em Zeros of principal $L$-functions and random matrix theory},
Duke Math. J. {\bf 81} (1996), 269--322.

\bibitem{schmidt} R. Schmidt, {\em The Saito-Kurokawa lifting and functoriality}, Amer. J. Math. 127 (2005), 209-240.
\bibitem{soudry} D. Soudry, {\em The CAP representations of $GSp(4,\A)$}, J. Reine Angew. Math. 383 (1988), 87-108. 
\bibitem{St} R. Steinberg, {\em Endomorphisms of linear algebraic groups}, Mem. Amer. Math. Soc. no. 80 (1968).
\bibitem{St1} \bysame, {\em Classes of elements of semisimple algebraic groups}, 1968 Proc. Internat. Congr. Math. (Moscow, 1966), 277-284
\bibitem{sup} D.A. Suprunenko, {\em Matrix Groups}, Translations of Math. Monographs, vol 45. AMS, Providence, 1976.

\bibitem{T} M. Tadic, {\em Spherical unitary dual of general linear group over non-Archimedean local field}, Ann. Inst. Fourier (Grenoble) 36 (1986), no. 2, 47--55.
\bibitem{T1} \bysame, {\em Classification of unitary representations in irreducible representations of general linear group (non-Archimedean case)}, Ann. Sci. Ec. Norm. Sup., {\bf 19} (1986), 335--382.
\bibitem{taylor} R. Taylor, {\em Galois representations associated to Siegel modular forms of low weight}, Duke Math. J. 63 (1991), 281-332.

\bibitem{wei} R. Weissauer, {\em Endoscopy for $GSp(4)$ and the cohomology of Siegel modular threefolds}. Lecture Notes in Mathematics 1968. Springer-Verlag, Berlin, 2009.
\end{thebibliography}
\end{document}